\newtheorem{theorem}{Theorem}
\newtheorem{lemma}[theorem]{Lemma}
\newenvironment{proof}[1][Proof]{\noindent\textbf{#1.} }{\ \rule{0.5em}{0.5em}}
\begin{document}

\title{Subsequential tightness of the maximum of two dimensional
Ginzburg-Landau fields}
\author{ Wei Wu\textsuperscript{1}, Ofer Zeitouni\textsuperscript{2}}
\date{February 26, 2018}
\maketitle

\begin{abstract}
We prove the subsequential tightness of centered maxima of two-dimensional
Ginzburg-Landau fields with bounded elliptic contrast.
\end{abstract}

\footnotetext[1]{%
Statistics department, University of Warwick, Coventry CV4 7AL, UK. E-mail:
W.Wu.9@warwick.ac.uk} \footnotetext[2]{%
Department of Mathematics, Weizmann Institute of Science, Rehovot 7610001,
Israel, and Courant Institute, New York University, 251 Mercer St., New
York, New York 10012. E-mail: ofer.zeitouni@weizmann.ac.il. Supported in
part by the ERC Advanced grant LogCorrelatedFields.}

\section{Introduction}

Let $V\in C^{2}\left( \mathbb{R}\right) $ satisfy 
\begin{gather}
\text{ }V\left( x\right) =V\left( -x\right) ,  \label{a1} \\
\text{ }0<c_{-}\leq V^{\prime \prime }\left( x\right) \leq c_{+}<\infty 
\text{,}  \label{a2}
\end{gather}%
{where }$c_{-},c_{+}${\ are positive constants.} The ratio $\kappa=c_+/c_-$
is called the \textit{elliptic contrast} of $V$. We assume \eqref{a1} and %
\eqref{a2} throughout this note without further mentioning it.

We treat $V$ as a a nearest neighbor potential for a two dimensional
Ginzburg-Landau gradient field. Explicitly, let $D_{N}:=\left[ -N,N\right]
^{2}\cap \mathbb{Z}^{2}$ and let the boundary $\partial D_{N}$ consist of
the vertices in $D_{N}$ that are connected to $\mathbb{Z}^{2}\setminus D_{N}$
by some edge. The Ginzburg-Landau field on $D_{N}$ with zero boundary
condition is a random field denoted by $\phi ^{D_{N},0}$, whose distribution
is given by the Gibbs measure 
\begin{equation}
d\mu_N =Z_{N}^{-1}\exp \left[ -\sum_{v\in D_{N}}\sum_{i=1}^{2}V\left( \nabla
_{i}\phi \left( v\right) \right) \right] \prod_{v\in D_{N}\backslash
\partial D_{N}}d\phi \left( v\right) \prod_{v\in \partial D_{N}}\delta
_{0}\left( \phi \left( v\right) \right) ,  \label{GL}
\end{equation}%
where $\nabla _{i}\phi \left( v\right) =\phi \left( v+e_{i}\right) -\phi
\left( v\right) $, {$e_{1}=(1,0)$ and $e_{2}=(0,1)$}, and $Z_{N}$ is the
normalizing constant ensuring that $\mu_N$ is a probability measure, i.e. $%
\mu_N(\mathbb{R}^{|D_N|})=1$. We denote expectation with respect to $\mu_N$
by $\mathbb{E}_N$, or simply by $\mathbb{E}$ when no confusion can occur.

Ginzburg-Landau fields with convex potential, which are natural
generalizations of the standard lattice Gaussian free field corresponding to
quadratic $V$ (DGFF), have been extensively studied since the seminal works 
\cite{FS,HS,NS}. Of particular relevance to this paper is Miller's coupling,
described in Section \ref{sec:hc} below, which shows that certain
multi-scale decompositions that hold for the Gaussian case continue to hold,
approximately, for the Ginzburg-Landau model.

In this paper, we study the maximum of Ginzburg-Landau fields. Given $%
U\subset D_{N}$, let 
\begin{equation*}
M_{U}:=\max_{x\in U}\phi ^{D_{N},0}\left( x\right) ,
\end{equation*}%
and set $M_{N}=M_{D_{N}}$. For the Gaussian case, we write $M_{N}^{G}$ for $%
M_{N}$. Much is known about $M_{N}^{G}$, following a long succession of
papers starting with \cite{Br83}. In particular, see \cite{BDZ} and \cite%
{BL2}, $M_{N}^{G}-m_{N}^{G}$ converges in distribution to a randomly shifted
Gumbel, with $m_{N}^{G}=c_{1}\log N-c_{2}\log \log N$ and explicit constants 
$c_{1},c_{2}$.

Much less is known concerning the extrema in the Ginzburg-Landau setup, even
though linear statistics of such fields converge to their Gaussian
counterparts \cite{NS}. A first step toward the study of the maximum was
undertaken in \cite{BW}, where the following law of large numbers is proved: 
%result for $M_{D_{N}}$ is
%proved,%
\begin{equation}
\frac{M_{D_{N}}}{\log N}\rightarrow 2\sqrt{g}\text{ in }L^{2}\text{, \ for
some }g=g\left( c_{+},c_{-}\right) .  \label{LLN}
\end{equation}

In this note we prove that the fluctuations of $M_{D_{N}}$ around its mean
are tight, at least along some (deterministic) subsequence.

\begin{theorem}
\label{main}There is a deterministic sequence $\left\{ n_{k}\right\} $ with $%
n_k\to_{k\to\infty} \infty$ such that the sequence of random variables $%
\left\{ M_{D_{n_{k}}}-\mathbb{E}M_{D_{n_{k}}}\right\} $ is tight.
\end{theorem}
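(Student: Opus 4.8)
The plan is to transport the multi-scale/hierarchical machinery used for the DGFF to the Ginzburg–Landau setting via Miller's coupling, and to use a second-moment comparison together with a superadditivity-type argument for the expected maximum to extract a tight subsequence. First I would establish a Gaussian-type upper bound on the upper tail of $M_{D_N}$: using Miller's coupling (Section \ref{sec:hc}) one decomposes $\phi^{D_N,0}$ into a sum of (approximately) independent fields living at dyadic scales, each well-approximated by a DGFF piece with variance comparable to $g\log(\text{scale})$ up to the elliptic-contrast constants. A union bound over the $\sim N^2$ vertices, combined with the known Gaussian estimates for each scale-increment, yields $\mathbb{P}(M_{D_N} \geq m_N + t) \leq C e^{-ct}$ for $t$ in a suitable range, where $m_N = 2\sqrt{g}\log N - c\log\log N$; the law of large numbers \eqref{LLN} guarantees $m_N$ has the right leading order, so this gives the one-sided tightness $\mathbb{P}(M_{D_N} - \mathbb{E}M_{D_N} \geq t) \leq Ce^{-ct}$ after controlling $\mathbb{E}M_{D_N}$ from above by $m_N + O(1)$.

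The harder direction is the lower tail: showing $\mathbb{P}(M_{D_N} - \mathbb{E}M_{D_N} \leq -t)$ is small, equivalently that $M_{D_N}$ does not fluctuate far below its mean. Here I would use the near-independence of the field restricted to a grid of well-separated sub-boxes of a fixed intermediate scale: by Miller's coupling the maxima over these sub-boxes are, up to a coupling error and an additive Gaussian field of bounded variance, independent copies of a translated maximum problem at the smaller scale. Since each such sub-box already contributes a maximum of order $2\sqrt{g}\log N$ with a non-degenerate lower tail, taking a maximum over $\gtrsim N^{2-\epsilon}$ of them forces $M_{D_N}$ to exceed $\mathbb{E}M_{D_N} - t$ except with probability exponentially small in $t$ — this is the standard "many independent tries" argument, but it requires the coupling errors to be negligible at the relevant scales, which is where the bounded elliptic contrast hypothesis is essential.

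The step I expect to be the main obstacle, and the reason the statement is only \emph{subsequential}, is controlling $\mathbb{E}M_{D_N}$ itself: unlike the Gaussian case there is no exact scaling relation, so one cannot directly prove $\mathbb{E}M_{D_{2N}} - \mathbb{E}M_{D_N}$ is bounded. Instead I would set $a_N := \mathbb{E}M_{D_N}$ and prove an approximate subadditivity/superadditivity estimate of the form $|a_{2N} - a_N - (m_{2N} - m_N)| \leq \epsilon_N$ with $\sum_k \epsilon_{n_k} < \infty$ along a sparse dyadic-type subsequence $n_k$ (say $n_{k+1} = n_k^2$ or a rapidly growing sequence), the errors coming from the Miller coupling defect; telescoping then shows $a_{n_k} - m_{n_k}$ is bounded along that subsequence. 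Combining the two tail bounds at the scales $n_k$ — where $\mathbb{E}M_{D_{n_k}} = m_{n_k} + O(1)$ is now known — gives $\mathbb{P}(|M_{D_{n_k}} - \mathbb{E}M_{D_{n_k}}| \geq t) \leq Ce^{-ct}$ uniformly in $k$, which is the desired tightness. The delicate point throughout is quantifying the coupling error as a function of scale and ensuring it decays fast enough to be summable along the chosen subsequence; getting a summable (rather than merely bounded) error is precisely what forces passing to a subsequence rather than obtaining tightness along the full sequence.
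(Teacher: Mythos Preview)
Your proposal misses the central mechanism of the paper and contains a circularity that would block the argument. The paper does \emph{not} prove separate upper and lower tail bounds around a centering $m_N = 2\sqrt{g}\log N - c\log\log N$; in fact no second-order term of the form $c\log\log N$ is known for the Ginzburg--Landau field, and your claim that a union bound plus Miller's coupling yields $\mathbb{P}(M_{D_N}\geq m_N+t)\leq Ce^{-ct}$ and hence $\mathbb{E}M_{D_N}\leq m_N+O(1)$ is substantially stronger than the theorem itself. Likewise, your approximate-subadditivity step $|a_{2N}-a_N-(m_{2N}-m_N)|\leq \epsilon_N$ has no clear route: to compare $\mathbb{E}M_{D_{2N}}-\mathbb{E}M_{D_N}$ to a constant you would already need control on the fluctuations of the sub-box maxima, which is what you are trying to prove. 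Finally, your diagnosis of \emph{why} one only gets a subsequence is off: Miller's coupling error is polynomially small in $N$ and is summable along any sequence; that is not the bottleneck.

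What the paper actually does is the Dekking--Host argument. One places two disjoint translates of a carefully shaped sub-domain $Y_{N,\delta}$ inside $D_{4N}$ (the shape is designed so that the harmonic correction $h^{(i)}$ from Miller's coupling has \emph{bounded} variance over $Y_{N,\delta}$, which is Lemma~\ref{A}; this is why one cannot simply use squares), obtains the recursion $\mathbb{E}m_{n+2}\geq \mathbb{E}\max\{m_n,m_n^\ast\}-C$ for $m_n=M_{Y_{2^n,\delta}}$ with $m_n^\ast$ an independent copy, and then uses the identity $\max\{a,b\}=\tfrac12(a+b+|a-b|)$ to get
\[
\mathbb{E}|m_n-\mathbb{E}m_n|\;\leq\;2\bigl(\mathbb{E}m_{n+2}-\mathbb{E}m_n\bigr)+C'.
\]
The only input needed about $\mathbb{E}m_n$ is the law of large numbers $\mathbb{E}m_n/n\to 2\sqrt{g}$, which forces $\mathbb{E}m_{n+2}-\mathbb{E}m_n$ to be bounded along a subsequence of density arbitrarily close to one; that is the true source of the subsequence. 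Tightness of $m_{n_k}-\mathbb{E}m_{n_k}$ then follows immediately in $L^1$, and a short argument (Lemma~\ref{thin}) transfers it to $M_{D_{N_k}}$. No exponential tail bounds, no $\log\log$ centering, and no summability of coupling errors are needed.
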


As will be clear from the proof, the sequence $\left\{ n_{k}\right\} $ can
be chosen with density arbitrarily close to $1$. Theorem \ref{main} is the
counterpart of an analogous result for the Gaussian case proved in \cite%
{BDeZ}, building on a technique introduced by Dekking and Host \cite{DH91}.
The Dekking-Host technique is also instrumental in the proof of Theorem \ref%
{main}. However, due to the fact that the Ginzburg-Landau field does not
possess good decoupling properties near the boundary, significant changes
need to be made. Additional crucial ingredients in the proof are Miller's
coupling and a decomposition in differences of harmonic functions introduced
in \cite{BW}.

\section{Preliminaries}

\subsection{The Brascamp-Lieb inequality}

One can bound the variances and exponential moments with respect to the
Ginzburg-Landau measure by those with respect to the Gaussian measure, using
the following Brascamp-Lieb inequality. Let $\phi $ be sampled from the
Gibbs measure (\ref{GL}). Given $\eta \in \mathbb{R}^{D_{N}}$, set 
\begin{equation*}
\left\langle \phi ,\eta \right\rangle :=\sum_{v\in D_{N}}\phi _{v}\eta
\left( v\right) .
\end{equation*}

\begin{lemma}[Brascamp-Lieb inequalities \protect\cite{BL}]
Assume that $V\in C^{2}\left( \mathbb{R}\right) $ satisfies $\inf_{x\in 
\mathbb{R}}V^{\prime \prime }\left( x\right) \geq c_{-}>0$. Let $\mathbb{E}_{%
\textup{GFF}}$ and $\textup{Var}_\textup{GFF}$ denote the expectation and
variance with respect to the \textup{DGFF}\ measure (that is, (\ref{GL})
with $V( x) =x^{2}/2$). Then for any $\eta \in \mathbb{R}^{D_{N}}$, 
\begin{eqnarray}
\textup{Var} \langle \phi ,\eta \rangle \quad & \leq & c_{-}^{-1}\textup{%
Var}_\textup{GFF}\left\langle \phi ,\eta \right\rangle ,
\label{eq: BL var bound} \\
\mathbb{E}\left[ \exp \left( \left\langle \phi ,\eta \right\rangle -\mathbb{E%
}\left\langle \phi ,\eta \right\rangle \right) \right] \quad \quad & \leq &
\exp \left( \frac{1}{2}c_{-}^{-1} \textup{Var}_\textup{GFF} \left\langle
\phi ,\eta \right\rangle \right) .  \label{eq: BL exp bound}
\end{eqnarray}
\end{lemma}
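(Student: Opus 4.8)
The plan is to deduce both bounds from the classical multivariate Brascamp--Lieb variance inequality \cite{BL} after identifying the Hessians involved, and then to obtain the exponential bound \eqref{eq: BL exp bound} from the variance bound \eqref{eq: BL var bound} by a tilting argument. First I would fix notation: writing $D_N^\circ=D_N\setminus\partial D_N$ for the set of free vertices, $\mu_N$ is a probability measure on $\mathbb{R}^{D_N^\circ}$ with density proportional to $e^{-H(\phi)}$, where the energy $H(\phi)=\sum_{v\in D_N}\sum_{i=1}^2 V(\nabla_i\phi(v))$ can be rewritten as $H(\phi)=\sum_e V(\ell_e(\phi))$ with $e$ ranging over the edges incident to $D_N$ and $\ell_e$ the linear functional giving the discrete gradient across $e$ (only the free coordinates of any $\eta$ matter, since $\phi$ vanishes on $\partial D_N$). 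The key observation is that $\mathcal{Q}:=\sum_e \ell_e\otimes\ell_e$ is exactly the Dirichlet Laplacian $-\Delta^{\mathrm{Dir}}_{D_N}$ on $D_N$ -- it is the Hessian of the quadratic energy, i.e.\ of the case $V(x)=x^2/2$ -- it is positive definite, and the DGFF is the centered Gaussian with covariance $\mathcal{Q}^{-1}$, so that $\textup{Var}_\textup{GFF}\langle\phi,\eta\rangle=\langle\eta,\mathcal{Q}^{-1}\eta\rangle$. Using the hypothesis $V''\geq c_-$, one then has pointwise in $\phi$ that $\mathrm{Hess}\,H(\phi)=\sum_e V''(\ell_e(\phi))\,\ell_e\otimes\ell_e\geq c_-\mathcal{Q}>0$; in particular $H$ is uniformly convex, so $\mu_N$ and the tilted measures below are well defined with finite exponential moments.

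For the variance bound \eqref{eq: BL var bound} I would apply the Brascamp--Lieb inequality to the strictly log-concave density $e^{-H}$ with the constant lower Hessian bound $A:=c_-\mathcal{Q}$, obtaining $\textup{Var}\langle\phi,\eta\rangle\leq\langle\eta,A^{-1}\eta\rangle=c_-^{-1}\langle\eta,\mathcal{Q}^{-1}\eta\rangle=c_-^{-1}\textup{Var}_\textup{GFF}\langle\phi,\eta\rangle$. If a self-contained proof is wanted instead of a citation, I would use the Helffer--Sjöstrand representation: with $\mathcal{L}=\Delta-\nabla H\cdot\nabla$ the generator reversible for $\mu_N$, solve the Poisson equation $-\mathcal{L}u=\langle\phi,\eta\rangle-\mathbb{E}\langle\phi,\eta\rangle$ and differentiate in the field variable to get $\mathcal{A}\,\nabla u=\eta$, where the operator $(\mathcal{A}v)_j:=-\mathcal{L}v_j+\sum_k(\partial_j\partial_k H)v_k$ on vector fields satisfies $\mathcal{A}\geq\mathrm{Hess}\,H\geq A$ (because $-\mathcal{L}\geq0$); integration by parts then gives $\textup{Var}\langle\phi,\eta\rangle=\mathbb{E}[\eta\cdot\nabla u]=\langle\eta,\mathcal{A}^{-1}\eta\rangle_{L^2(\mu_N)}\leq\langle\eta,A^{-1}\eta\rangle$, using that $\eta$ is a constant field.

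For the exponential bound \eqref{eq: BL exp bound} I would tilt: for $t\in\mathbb{R}$ let $\mu_N^{(t)}$ have density proportional to $\exp(t\langle\phi,\eta\rangle-H(\phi))$. Since a linear tilt leaves the Hessian unchanged, $\mathrm{Hess}(H-t\langle\cdot,\eta\rangle)=\mathrm{Hess}\,H\geq c_-\mathcal{Q}$, and the variance bound of the previous step applies verbatim to every $\mu_N^{(t)}$. Setting $\Lambda(t)=\log\mathbb{E}[e^{t\langle\phi,\eta\rangle}]$, one has $\Lambda(0)=0$, $\Lambda'(0)=\mathbb{E}\langle\phi,\eta\rangle$, and $\Lambda''(t)=\textup{Var}_{\mu_N^{(t)}}\langle\phi,\eta\rangle\leq c_-^{-1}\textup{Var}_\textup{GFF}\langle\phi,\eta\rangle$; Taylor's formula with integral remainder at $t=1$ then yields $\Lambda(1)\leq\mathbb{E}\langle\phi,\eta\rangle+\tfrac12 c_-^{-1}\textup{Var}_\textup{GFF}\langle\phi,\eta\rangle$, which rearranges to \eqref{eq: BL exp bound}.

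The only genuine difficulty is the Brascamp--Lieb variance inequality underlying the second paragraph; everything else is bookkeeping (checking the boundary conventions so that $\mathcal{Q}$ really is the Dirichlet Laplacian under the normalization $V(x)=x^2/2$, and noting that the linear tilt preserves uniform convexity). Since the state space here is finite-dimensional and $H$ is genuinely uniformly convex, the Helffer--Sjöstrand argument sketched above has no domain or closability subtleties and can be carried out in full; alternatively one simply invokes \cite{BL} directly.
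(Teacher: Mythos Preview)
Your argument is correct. The paper does not actually prove this lemma: it is stated as a classical fact with a citation to \cite{BL} and used as a black box throughout. What you have supplied is the standard derivation --- identify $\mathrm{Hess}\,H\geq c_-\mathcal{Q}$ with $\mathcal{Q}$ the Dirichlet Laplacian, invoke the Brascamp--Lieb variance inequality (or its Helffer--Sj\"ostrand proof) for \eqref{eq: BL var bound}, and then integrate the bound $\Lambda''(t)\leq c_-^{-1}\mathrm{Var}_{\mathrm{GFF}}\langle\phi,\eta\rangle$ along the exponential tilt to obtain \eqref{eq: BL exp bound}. There is nothing to compare against in the paper, but your proof is the expected one and contains no gaps; the only minor bookkeeping point worth double-checking is the edge convention (each interior edge should be counted once in $\sum_e\ell_e\otimes\ell_e$ so that $\mathcal{Q}$ matches the Hessian of $\sum_v\sum_i(\nabla_i\phi(v))^2/2$), which you already flag.
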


\subsection{Approximate harmonic coupling\label{sec:hc}}

By their definition, the Ginzburg-Landau measures satisfy the domain Markov
property: conditioned on the values on the boundary of a domain, the field
inside the domain is again a gradient field with boundary condition given by
the conditioned values. For the discrete GFF, there is in addition a nice
orthogonal decomposition. More precisely, the conditioned field inside the
domain is the discrete harmonic extension of the boundary value to the whole
domain plus an \textit{independent }copy of a \textit{zero boundary}
discrete GFF.

While this exact decomposition does not carry over to general
Ginzburg-Landau measures, the next result due to Jason Miller, see \cite{M},
provides an approximate version.

\begin{theorem}[\protect\cite{M}]
\label{decouple} Let $D\subset \mathbb{Z}^{2}$ be a simply connected domain
of diameter $R$, and denote $D^{r}=\left\{ x\in D:\textup{dist}(x,\partial
D)>r\right\} $. Let $\Lambda $ be such that $f:\partial D\rightarrow \mathbb{%
R}$ satisfies $\max_{x\in \partial D}\left\vert f\left( x\right) \right\vert
\leq \Lambda \left\vert \log R\right\vert ^{\Lambda }$. Let $\phi $ be
sampled from the Ginzburg-Landau measure (\ref{GL}) on $D$ with zero
boundary condition, and $\phi ^{f}$ be sampled from Ginzburg-Landau measure
on $D$ with boundary condition $f$. Then there exist constants $c,\gamma
,\delta ^{\prime }\in \left( 0,1\right) $, that only depend on $V$, so that
if $r>cR^{\gamma }$ then the following holds. There exists a coupling $%
\left( \phi ,\phi ^{f}\right) $, such that if $\hat{\phi}:D^{r}\rightarrow 
\mathbb{R}$ is discrete harmonic with $\hat{\phi}|_{\partial D^{r}}=\phi
^{f}-\phi |_{\partial D^{r}}$, then 
\begin{equation*}
\mathbb{P}\left( \phi ^{f}=\phi +\hat{\phi}\text{ in }D^{r}\right) \geq
1-c\left( \Lambda \right) R^{-\delta ^{\prime }}.
\end{equation*}
\end{theorem}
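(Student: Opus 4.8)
The plan is to follow Miller's strategy and build the coupling dynamically. Introduce the Ginzburg--Landau Langevin dynamics on $D$: for $v\in D\setminus\partial D$,
\[ d\phi_t(v)=-\sum_{i=1}^2\big(V'(\nabla_i\phi_t(v))-V'(\nabla_i\phi_t(v-e_i))\big)\,dt+\sqrt 2\,dW_t(v),\qquad \phi_t|_{\partial D}\equiv 0, \]
and the analogous system $\phi^f_t$ with $\phi^f_t|_{\partial D}\equiv f$, driven by the \emph{same} independent Brownian motions $\{W_t(v)\}_v$ and started from their respective stationary laws (both of which are of the form (\ref{GL})), the two initial fields coupled in an arbitrary way. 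Since the noises cancel, the difference $\psi_t:=\phi^f_t-\phi_t$ solves the \emph{noiseless} discrete parabolic equation
\[ \partial_t\psi_t(v)=\sum_{w\sim v}a_t(v,w)\big(\psi_t(w)-\psi_t(v)\big),\qquad v\in D\setminus\partial D,\qquad \psi_t|_{\partial D}\equiv f, \]
where, by the mean value theorem applied to $V'$ on the edge $\{v,w\}$, the conductance $a_t(v,w)=\int_0^1 V''\big(s\,\nabla\phi^f_t+(1-s)\,\nabla\phi_t\big)\,ds$ lies in $[c_-,c_+]$ by (\ref{a2}). Thus, pathwise, $\psi$ is the solution of a uniformly elliptic parabolic equation in a random, space-time dependent environment, with time-independent boundary data $f$.

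Next I would show that running the dynamics for a time $T\asymp R^2\log R$ makes $\psi_T$ approximately discrete harmonic in the bulk. The random walk representation gives $\psi_T(v)=E^v[\psi_0(Y_T)\mathbf{1}_{\tau>T}]+E^v[f(Y_\tau)\mathbf{1}_{\tau\le T}]$, where $Y$ is the continuous-time random walk with (time-reversed) jump rates $a$ started at $v$ and $\tau$ is its exit time from $D\setminus\partial D$. Uniform ellipticity forces $P^v(\tau>T)$ to be superpolynomially small for this $T$, while $\|\psi_0\|_\infty$ is bounded by a polynomial in $R$ with overwhelming probability, using $\max|f|\le\Lambda|\log R|^\Lambda$, the Brascamp--Lieb variance bound (\ref{eq: BL var bound}), and a union bound; hence the first term is negligible and $\psi_T(v)=E^v[f(Y_\tau)]+O(R^{-100})$. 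It then remains to compare $E^v[f(Y_\tau)]$ with $h(v)$, the discrete harmonic extension of $f$, which is bounded by $\max|f|$ everywhere and smooth in the interior. This is a quenched invariance principle with a quantitative (polynomial) rate for $Y$ in the uniformly elliptic dynamic environment generated by the GL dynamics in equilibrium; the environment mixing needed to run a Kipnis--Varadhan/corrector argument follows from the Helffer--Sjöstrand representation together with the spectral-gap consequences of Brascamp--Lieb. Comparing the harmonic measure seen by $Y$ from $v$ with that of Brownian motion, and the latter with the discrete one, yields $\max_{v\in D^{\rho}}|\psi_T(v)-h(v)|\le c(\Lambda)R^{-\delta''}$, for every $\rho>cR^\gamma$, on an event $E$ with $P(E)\ge 1-c(\Lambda)R^{-\delta''}$; the boundary layer of width $cR^\gamma$ is exactly the scale below which the invariance principle stops being effective.

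The hard part is to upgrade this approximate harmonicity to the \emph{exact} identity claimed in the theorem. Set $\rho:=cR^\gamma$, so that $D^r\subset D^\rho$ by hypothesis, and condition, via the domain Markov property, on the coupled configuration $(\phi_T,\phi^f_T)$ on $D\setminus D^\rho$. Conditionally, $\phi_T|_{D^\rho}$ and $\phi^f_T|_{D^\rho}$ are Ginzburg--Landau fields on $D^\rho$ with boundary data $a:=\phi_T|_{\partial D^\rho}$ and $a^f:=\phi^f_T|_{\partial D^\rho}$, and on $E$ the discrete harmonic extension $\hat\phi$ of $b:=a^f-a$ to $D^\rho$ satisfies $\max_{D^\rho}|\hat\phi-\psi_T|\le c(\Lambda)R^{-\delta''}$. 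Since the restriction of a discrete harmonic function to $D^r\subset D^\rho$ is again discrete harmonic there, it suffices to couple the conditional law of $\phi^f_T|_{D^\rho}$ with the law of the resampled, translated field $\phi_T|_{D^\rho}+\hat\phi$ so that they coincide: then $\phi^f=\phi+\hat\phi$ on $D^r$, and on $D^r$ the function $\hat\phi$ is the discrete harmonic extension of $(\phi^f-\phi)|_{\partial D^r}$, which is exactly the theorem's assertion. These two conditional measures have Radon--Nikodym derivative proportional to $\exp\big[-\sum_{e\subset D^\rho}\big(V(\nabla\phi(e)+\nabla\hat\phi(e))-V(\nabla\phi(e))\big)\big]$, and here I expect the genuine obstacle: after expanding and summing by parts, the first-order term $\sum_e V'(\nabla\phi(e))\,\nabla\hat\phi(e)$ must be shown to concentrate around a controlled value, and a crude second-order bound does not suffice. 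For quadratic $V$ this first-order term telescopes exactly to a boundary contribution --- which is precisely why the discrete GFF enjoys the orthogonal decomposition into harmonic extension plus independent zero-boundary field --- whereas for general $V$ one has only approximate cancellation, which must be extracted from the harmonicity of $\hat\phi$, the equilibrium (integration-by-parts) identities of the GL measure, the regularity of $\hat\phi$ away from $\partial D^\rho$, and the Brascamp--Lieb bounds (\ref{eq: BL var bound})--(\ref{eq: BL exp bound}). Once this comparison yields total-variation distance $\le c(\Lambda)R^{-\delta'}$ between the two conditional laws, coupling them optimally and intersecting with $E$ produces the coupling asserted in the theorem.
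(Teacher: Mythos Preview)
The paper does not prove this theorem; it is quoted from Miller \cite{M} as a black-box input in Section~\ref{sec:hc}, and no argument whatsoever is supplied. There is therefore nothing in the present paper to compare your proposal against.

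For what it is worth, your outline follows the architecture of Miller's actual proof: couple $\phi$ and $\phi^{f}$ through the Langevin dynamics driven by the same Brownian motions, so that the difference $\psi_{t}=\phi^{f}_{t}-\phi_{t}$ solves a deterministic discrete parabolic equation with uniformly elliptic, random, time-dependent conductances; then use a quantitative central limit theorem for the associated random walk in that dynamic environment to conclude that $\psi_{T}$ is close to the discrete harmonic extension of $f$ in the bulk after a time $T$ of order $R^{2}\log R$. You correctly identify the passage from ``approximately harmonic'' to the \emph{exact} identity $\phi^{f}=\phi+\hat\phi$ on $D^{r}$ as the delicate point, and you are candid that your Radon--Nikodym comparison for this step is incomplete. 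The quantitative invariance principle you invoke is also the technically heavy part of Miller's paper, and the toolbox you name (Helffer--Sj\"ostrand, Kipnis--Varadhan/corrector, Nash-type estimates) is the right one, but what you have written is a plan rather than a proof. In short: reasonable road map, acknowledged gaps, and none of it is in the paper under review.
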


Here and in the sequel of the paper, for a set $A\subset \mathbb{Z}^2$ and a
point $x\in \mathbb{Z}^2$, we use $\textup{dist}(x,A)$ to denote the
(lattice) distance from $x$ to $A$.

\subsection{Pointwise tail bound}

We also recall the pointwise tail bound for the Ginzburg-Landau field (\ref%
{GL}), proved in \cite{BW}.

\begin{theorem}
\label{thm: tail bound} Let $g$ be the constant as in (\ref{LLN}). For all $%
u>0$ large enough and all $v\in D_{N}$ we have

\begin{equation}
\mathbb{P}\left( \phi _{v}\geq u\right) \leq \exp \left( -\frac{u^{2}}{%
2g\log \textup{dist}( v,\partial D_{N}) }+o( u) \right) .  \label{inn}
\end{equation}
\end{theorem}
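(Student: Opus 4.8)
The plan is to reduce (\ref{inn}) to a Gaussian-type exponential-moment bound for $\phi_v$ and then prove that bound by a multiscale decomposition. Since $V(x)=V(-x)$ by (\ref{a1}), the measure (\ref{GL}) is invariant under $\phi\mapsto-\phi$, so $\mathbb{E}_N\phi_v=0$, and -- writing $d:=\textup{dist}(v,\partial D_N)$ -- the bound (\ref{inn}) follows by a Chernoff estimate, optimized at $\lambda$ of order $u/\log d$ (a bounded parameter in the regime $u$ of order $\log d$, which is the one relevant for controlling $M_{D_N}$), once one shows that
\begin{equation*}
\mathbb{E}_N\bigl[e^{\lambda\phi_v}\bigr]\ \le\ \exp\!\Bigl(\tfrac12\lambda^2\,(g+o(1))\,\log d\Bigr)
\end{equation*}
holds uniformly for $\lambda$ in a fixed bounded interval, with $o(1)\to0$ as $d\to\infty$; an additive $o(\log d)$ inside this exponent becomes an additive $o(u)$ in (\ref{inn}). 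For $d$ bounded this, and (\ref{inn}) itself, are immediate from (\ref{eq: BL var bound})--(\ref{eq: BL exp bound}), so assume $d\to\infty$. The difficulty is that one needs the \emph{sharp} constant $g$ of (\ref{LLN}): a direct use of (\ref{eq: BL exp bound}) only bounds the exponent by the Gaussian variance $\tfrac12\lambda^2 c_-^{-1}\Var_{\GFF}(\phi_v)$, which in general is not asymptotic to $\tfrac12\lambda^2 g\log d$.

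To obtain $g$, fix $\alpha\in(0,1)$, set $R_j:=d^{2^{-j}}$ for $0\le j\le m$, where $m$ (of order $\log\log d$) is chosen so that $R_m=\exp((\log d)^{\alpha})$, let $B_j$ be the box of radius $R_j$ about $v$ (so $R_0=d$), and put $\mathcal{G}_j:=\sigma(\phi|_{D_N\setminus B_j})$, an increasing filtration with $\mathcal{G}_0$ trivial (since $B_0$ reaches $\partial D_N$, where $\phi\equiv0$). With $M_j:=\mathbb{E}_N[\phi_v\mid\mathcal{G}_j]$ this gives the Doob-martingale decomposition
\begin{equation*}
\phi_v\ =\ \sum_{j=1}^{m}W_j\ +\ (\phi_v-M_m),\qquad W_j:=M_j-M_{j-1},
\end{equation*}
a sum of orthogonal increments plus a remainder. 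Here $M_m=\mathbb{E}_N[\phi_v\mid\phi|_{\partial B_m}]$, so $\Var_N(\phi_v-M_m)$ is the expected conditional variance of the field at $v$ given $\phi|_{\partial B_m}$, which by the homogenization estimates underlying (\ref{LLN}) (see \cite{BW}) is $(g+o(1))\log R_m=O((\log d)^{\alpha})=o(\log d)$; thus $\phi_v-M_m$ is absorbed crudely via (\ref{eq: BL exp bound}) and contributes only $o(u)$ to the exponent.

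By the domain Markov property, $M_j$ is the mean at $v$ of a Ginzburg-Landau field on $B_j$ with boundary data $\phi|_{\partial B_j}$. Theorem~\ref{decouple}, applied with $D=B_j$ -- after discarding the rare, Brascamp-Lieb-controlled event that $\phi|_{\partial B_j}$ violates the hypothesis $\max|f|\le\Lambda|\log R_j|^{\Lambda}$ (this is precisely why one stops at a scale $R_m=\exp((\log d)^{\alpha})$, below which the boundary data is typically too large) -- identifies $M_j$, up to a coupling error of conditional probability $\le cR_j^{-\delta'}$, with the discrete harmonic extension of $\phi|_{\partial B_j}$ to $v$. Hence $W_j$ is, modulo controlled errors, a difference of discrete harmonic extensions evaluated at $v$: a linear functional of the Ginzburg-Landau field in $B_{j-1}$ spread over the macroscopic scale $R_j$, with conditional mean $\approx0$ and conditional variance $(g+o(1))\log(R_{j-1}/R_j)$ (again from \cite{BW}'s homogenization estimates). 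For such a macroscopic functional a quantitative Gaussian-comparison estimate for the Ginzburg-Landau field (Naddaf-Spencer \cite{NS}, in the quantitative form used in \cite{BW}) then gives $\mathbb{E}_N[e^{\lambda W_j}\mid\mathcal{G}_{j-1}]\le\exp(\tfrac12\lambda^2(g+o(1))\log(R_{j-1}/R_j))$, uniformly in the bounded $\lambda$-range, with the $o(1)$ uniform since $R_j\ge R_m\to\infty$. Because this bound is essentially deterministic, multiplying the conditional estimates along the filtration and telescoping $\sum_{j=1}^m\log(R_{j-1}/R_j)=\log(R_0/R_m)=(1+o(1))\log d$ yields the desired exponential-moment bound (and, on summing the increment variances, the companion estimate $\Var_N(\phi_v)\le(g+o(1))\log d$); optimizing the Chernoff bound over $\lambda$ gives (\ref{inn}).

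The crux is the first step: no single application of the Brascamp-Lieb inequalities (\ref{eq: BL var bound})--(\ref{eq: BL exp bound}) sees the homogenized constant $g$, which forces the use of both Miller's coupling -- to linearize the conditional means $M_j$ into genuine discrete harmonic extensions -- and the homogenization / quantitative-CLT input -- to certify that the resulting macroscopic increments carry the homogenized variance -- and then their combination along the martingale. The remainder is error accounting over the $O(\log\log d)$ scales: the coupling errors sum to at most $CR_m^{-\delta'}=o(1)$; the boundary-truncation events have total probability $o(1)$ by the crude Brascamp-Lieb tail; the remainder $\phi_v-M_m$ has variance $o(\log d)$; and one checks that, after optimizing $\lambda$, all of these contribute only an additive $o(u)$ to the exponent in (\ref{inn}) when $u$ is of order $\log d$ -- with $v$ near $\partial D_N$ dispatched directly by Brascamp-Lieb.
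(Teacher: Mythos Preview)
The paper does not prove this theorem: it is quoted in Section~2.3 as a preliminary result from \cite{BW}, with the sentence ``We also recall the pointwise tail bound for the Ginzburg-Landau field (\ref{GL}), proved in \cite{BW}.'' So there is no in-paper proof to compare against.

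Your outline is essentially the strategy one expects from \cite{BW}: a multiscale/martingale decomposition of $\phi_v$ along nested boxes, Miller's coupling (Theorem~\ref{decouple}) to replace the non-Gaussian conditional means by discrete-harmonic extensions, and homogenization / quantitative Naddaf--Spencer input to pin down the increment variances as $(g+o(1))\log(R_{j-1}/R_j)$ with the \emph{correct} constant $g$, not the crude Brascamp--Lieb constant $c_-^{-1}g_0$. That is exactly the point you identify as the crux, and it is the right one. Two remarks on scope. First, your Chernoff reduction is carried out only for $\lambda$ bounded, i.e.\ for $u$ of order $\log d$; you flag this as ``the regime relevant for $M_{D_N}$'', which is true for the application in Lemma~\ref{thin}, but the theorem as stated covers all large $u$, so strictly speaking you have argued the case actually used rather than the full statement. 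Second, the step ``Miller's coupling identifies $M_j$ with the discrete harmonic extension'' hides real work: the harmonic correction $\hat\phi$ in Theorem~\ref{decouple} has boundary data $\phi^f-\phi$ on $\partial D^r$, hence depends on both fields in the coupling, and turning this into a clean statement about $M_j=\mathbb{E}[\phi_v\mid\mathcal G_j]$ (a nonlinear functional of the boundary data for non-Gaussian $V$) requires the more careful harmonic-difference bookkeeping that \cite{BW} carries out and that the present paper reuses in Lemmas~\ref{A}--\ref{B}. Your sketch gestures at this but does not resolve it.
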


This allows us to conclude that the maximum of $\phi ^{D_{N},0}$ does not
occur within a thin layer near the boundary.

\begin{lemma}
\label{thin} Given $\delta <1$, there exists $\delta ^{\prime }>0$ such that%
\begin{equation*}
\mathbb{P}\left( M_{A_{N,N^{\delta }}}>\left( 2\sqrt{g}-\delta ^{\prime
}\right) \log N\right) \leq N^{\frac{\delta -1}{2}},
\end{equation*}%
where%
\begin{equation*}
A_{N,N^{\delta }}:=\left\{ v\in D_{N}:\textup{dist}( x,\partial D_{N})
<N^{\delta }\right\} .
\end{equation*}
\end{lemma}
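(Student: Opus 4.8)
The plan is to use the pointwise tail bound (Theorem \ref{thm: tail bound}) together with a union bound over the thin annular layer $A_{N,N^\delta}$. First, I would observe that $|A_{N,N^\delta}| \le C N^{1+\delta}$ for some absolute constant $C$, since the layer has width roughly $N^\delta$ along a boundary of length $O(N)$. Then, for each $v \in A_{N,N^\delta}$, we have $\textup{dist}(v,\partial D_N) < N^\delta$, so $\log \textup{dist}(v,\partial D_N) < \delta \log N$. Plugging $u = (2\sqrt{g} - \delta')\log N$ into \eqref{inn}, the exponent becomes approximately
\[
-\frac{(2\sqrt{g}-\delta')^2 (\log N)^2}{2g\delta \log N} + o(\log N) = -\frac{(2\sqrt{g}-\delta')^2}{2g\delta}\log N + o(\log N).
\]
So $\mathbb{P}(\phi_v \ge u) \le N^{-(2\sqrt{g}-\delta')^2/(2g\delta) + o(1)}$.

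Next I would combine this with the union bound to get
\[
\mathbb{P}\left(M_{A_{N,N^\delta}} > (2\sqrt{g}-\delta')\log N\right) \le |A_{N,N^\delta}| \cdot N^{-(2\sqrt{g}-\delta')^2/(2g\delta) + o(1)} \le C N^{1+\delta - (2\sqrt{g}-\delta')^2/(2g\delta) + o(1)}.
\]
The key point is then a calculation verifying that one can choose $\delta' > 0$ small enough (depending on $\delta$) so that the exponent is at most $(\delta-1)/2$. When $\delta' = 0$, the exponent equals $1 + \delta - \frac{4g}{2g\delta} = 1 + \delta - \frac{2}{\delta}$; since $\delta < 1$, we have $\frac{2}{\delta} > 2 > 1 + \delta$, so this is strictly negative, and in fact one checks $1 + \delta - \frac{2}{\delta} < \frac{\delta - 1}{2}$ holds for all $\delta \in (0,1)$ (equivalently $\frac{2}{\delta} > \frac{3(\delta+1)}{2} - 1 + 1= \ldots$, a routine inequality in one variable). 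By continuity of the exponent in $\delta'$, there is a margin, so for $\delta'$ sufficiently small the exponent stays below $(\delta-1)/2$ even after absorbing the $o(1)$ error and the constant $C$, for all $N$ large. For small $N$ one adjusts $\delta'$ or notes the probability bound is trivially satisfiable; alternatively one states the bound for $N$ large enough, which suffices for the tightness application.

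The main obstacle — though it is more of a bookkeeping matter than a genuine difficulty — is handling the $o(u)$ term in Theorem \ref{thm: tail bound} uniformly over $v \in A_{N,N^\delta}$ and making sure it does not interfere with the strict inequality in the exponent; since with $u \asymp \log N$ this term is $o(\log N)$ and the gap in the exponent is a fixed positive constant times $\log N$, there is no real problem, but one should be careful that the $o(u)$ is uniform in $v$ (which it is, as the tail bound in \cite{BW} is stated uniformly over $v \in D_N$). A secondary point is that the tail bound requires $u$ large, which is automatic here since $u = (2\sqrt g - \delta')\log N \to \infty$. I would therefore present the proof as: (i) volume estimate for the annulus; (ii) distance bound giving the per-vertex tail estimate; (iii) union bound; (iv) the one-variable inequality choosing $\delta'$; (v) conclude.
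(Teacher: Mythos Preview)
Your proof is correct and follows the same strategy as the paper: apply the pointwise tail bound of Theorem~\ref{thm: tail bound} to each vertex in the thin layer and then take a union bound over $|A_{N,N^\delta}|=O(N^{1+\delta})$ vertices. The only cosmetic difference is that you insert the sharper inequality $\log\Delta<\delta\log N$ into the exponent, whereas the paper uses the cruder $\log\Delta\le\log N$; this makes the paper's per-vertex bound simply $N^{-2+2\delta'/\sqrt{g}+o(1)}$ and reduces the final choice of $\delta'$ to the clean condition $2\delta'/\sqrt{g}<(1-\delta)/2$, avoiding your one-variable inequality (which, as you noted, is equivalent to $(\delta+4)(\delta-1)<0$ and hence holds for $\delta\in(0,1)$).
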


\begin{proof}
Let $\Delta =\textup{dist}( x,\partial D_{N}) $. For $\delta ^{\prime }$
small enough, applying Theorem \ref{thm: tail bound} with $u=\left( 2\sqrt{g}%
-\delta ^{\prime }\right) \log N$ yields%
\begin{eqnarray*}
P\left( \phi _{v}\geq \left( 2\sqrt{g}-\delta ^{\prime }\right) \log
N\right) &\leq &\exp \left( -2\frac{\left( \log N\right) ^{2}}{\log \Delta }+%
\frac{2\delta ^{\prime }}{\sqrt{g}}\frac{\left( \log N\right) ^{2}}{\log
\Delta }+o\left( \log N\right) \right) \\
&\leq &N^{-2+2\delta ^{\prime }/\sqrt{g}+o\left( 1\right) },\text{ \ \ for
all }v\in A_{N,N^{\delta }}.
\end{eqnarray*}%
Therefore a union bound yields 
\begin{equation*}
P\left( M_{A_{N,N^{\delta }}}\geq \left( 2\sqrt{g}-\delta ^{\prime }\right)
\log N\right) \leq N^{\delta -1+2\delta ^{\prime }/\sqrt{g}+o\left( 1\right)
}.
\end{equation*}%
It suffices to take $\delta ^{\prime }$ such that $2\delta ^{\prime }/\sqrt{g%
}<\frac{1-\delta }{2}$.
\end{proof}

\section{The recursion and proof of Theorem \protect\ref{main}}

We prove Theorem \ref{main} by establishing a recursion for some random
variable $M_{Y_{N}}$, where $Y_{N}\subset D_{N}$ is a specific subset
defined below. Denote by $T_N=[-N,N]\times \{N\}\subset D_N$ the top
boundary of $D_{N}$. For fixed $\varepsilon >0$, define%
\begin{equation*}
Y_{N}=\left\{ v\in D_{N}:\textup{dist}( v,\partial D_{N}) \geq \varepsilon
N\right\} \cup \left\{ v\in D_{N}:\textup{dist}( v,\partial D_{N}) =%
\textup{dist}(v,T_N) \right\}.
\end{equation*}%
For $\delta \in \left( 0,1\right) $, we also define $Y_{N,\delta }\subset
Y_{N}$ as 
\begin{equation*}
Y_{N,\delta }=\left\{ v\in Y_{N}:\textup{dist}( v,T_N) >N^{1-\delta
}\right\} ,
\end{equation*}%
see Figure \ref{fig:YNd}. 
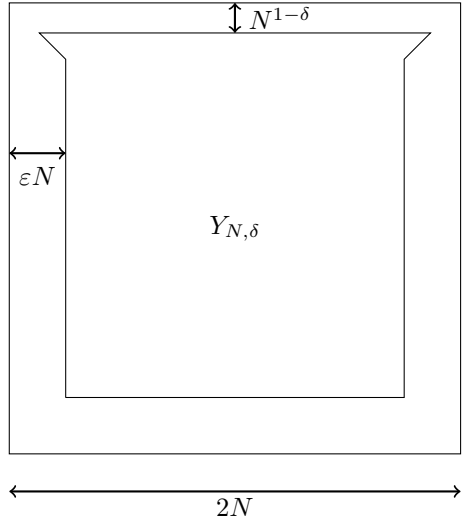
\begin{figure}[h]
\begin{center}
\begin{tikzpicture}
%  [ endstyle/.style={circle,fill=blue,inner sep=1.5pt}
%  ]

\draw (0,0) rectangle (6,6);
\draw (0.75,0.75) -- (5.25,0.75) -- (5.25,5.25) -- (5.6,5.6) -- (0.4,5.6) -- (0.75,5.25) -- (0.75,0.75);
\draw[thick,<->] (0,-0.5) -- (6,-0.5);
\draw[thick,<->] (0,4) -- (0.75,4);
\draw[thick,<->] (3,6) -- (3,5.6);
\node  at (0.375, 3.7) {$\varepsilon N$};
\node at (3.6,5.8) {$N^{1-\delta}$};
\node  at (3,-0.7) {$2N$};
\node at (3,3) {$Y_{N,\delta}$};
\end{tikzpicture}
\end{center}
\caption{The domain $Y_{N,\protect\delta}$.}
\label{fig:YNd}
\end{figure}
%(ADD FIG 1)

\begin{lemma}
\label{YLLN}For the constant $g=g\left( c_{+},c_{-}\right) $ in (\ref{LLN}),
we have 
\begin{equation}  \label{eq-YLLN}
\frac{M_{Y_{N,\delta }}}{\log N}\rightarrow 2\sqrt{g}\text{ in }L^{2}.
\end{equation}
\end{lemma}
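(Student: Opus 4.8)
The plan is to sandwich $M_{Y_{N,\delta}}$ between quantities whose $\log N$-normalized limit is already known to be $2\sqrt{g}$, namely $M_{D_N}$ from \eqref{LLN} above and the contribution of a macroscopic interior box. For the \emph{upper} bound, simply note $Y_{N,\delta}\subset D_N$, so $M_{Y_{N,\delta}}\le M_{D_N}$, and \eqref{LLN} gives $\limsup_{N} M_{Y_{N,\delta}}/\log N \le 2\sqrt{g}$ in $L^2$. (Strictly, to keep this in $L^2$ one needs a uniform integrability statement for $(M_{D_N}/\log N)^2$, which also follows from the exponential/Gaussian comparison in \eqref{eq: BL exp bound} together with Theorem \ref{thm: tail bound}; I would record this as a short remark.)

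For the \emph{lower} bound I would exhibit a fixed macroscopic square well inside $Y_{N,\delta}$ on which the field is large. Concretely, fix a sub-square $B_{N}\subset \{v:\textup{dist}(v,\partial D_N)\ge \varepsilon N\}\subset Y_{N,\delta}$ of side length of order $N$ (for instance centered at distance $\sim 2\varepsilon N$ below the top boundary), so $B_N\subset Y_{N,\delta}$ for all large $N$ since $2\varepsilon N \gg N^{1-\delta}$. By the domain Markov property, conditioned on $\phi$ on $\partial B_N$, the restriction of $\phi^{D_N,0}$ to $B_N$ is a Ginzburg--Landau field on $B_N$ with boundary data $f=\phi|_{\partial B_N}$. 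On the event that this boundary data is not too large — say $\max_{\partial B_N}|\phi| \le (\log N)^2$, which has probability $\to 1$ by \eqref{eq: BL exp bound} and a union bound — Miller's coupling (Theorem \ref{decouple}, applied on $D=B_N$, $R\asymp N$, $\Lambda$ fixed) lets us write $\phi|_{B_N^r} = \phi^{B_N,0} + \hat\phi$ on $B_N^r$ with $\hat\phi$ discrete harmonic, off an event of probability $O(N^{-\delta'})$. Then $M_{Y_{N,\delta}} \ge \max_{B_N^r}\phi \ge \max_{B_N^r}\phi^{B_N,0} + \min_{B_N^r}\hat\phi \ge \max_{B_N^r}\phi^{B_N,0} - \|\hat\phi\|_\infty$. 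The harmonic correction $\hat\phi$ on $B_N^r$ is controlled by its boundary values $\phi^f-\phi$ on $\partial B_N^r$, which by the Brascamp--Lieb variance bound \eqref{eq: BL var bound} and a union bound is $o(\log N)$ with high probability (its typical size is $\sqrt{\log N}$). Finally $M_{B_N^r}$ for the zero-boundary field on the macroscopic box $B_N^r$ (which is itself a scaled copy of $D_{N'}$ with $N'\asymp N$) satisfies $M_{B_N^r}/\log N \to 2\sqrt{g}$ by \eqref{LLN} applied to that box. Combining, $\liminf_N M_{Y_{N,\delta}}/\log N \ge 2\sqrt g$.

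To upgrade convergence in probability to $L^2$ convergence I would again invoke uniform integrability of $\{(M_{Y_{N,\delta}}/\log N)^2\}$: the upper tail comes from the union bound over $v\in D_N$ of \eqref{inn} (or of the Gaussian comparison \eqref{eq: BL exp bound}), giving $\mathbb P(M_{Y_{N,\delta}} > t\log N) \le e^{-c t^2 \log N}$ type bounds for $t$ large, and the lower tail is trivial since $M_{Y_{N,\delta}}\ge \phi_v^{D_N,0}$ for a fixed interior $v$ whose negative moments are Gaussian-dominated. Together with convergence in probability this yields \eqref{eq-YLLN}.

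The main obstacle is the lower bound, specifically making sure the box $B_N$ is placed \emph{strictly inside} $Y_{N,\delta}$ — the definition of $Y_{N,\delta}$ excludes a $N^{1-\delta}$-neighborhood of the top boundary $T_N$, but it retains points within $\varepsilon N$ of the \emph{other} three sides (those on the ``diagonal'' branch), so I must be careful that the macroscopic box sits in the genuinely interior region $\{\textup{dist}(v,\partial D_N)\ge \varepsilon N\}$ rather than in that thin branch, where Miller's coupling would not apply. The second delicate point is verifying the hypotheses of Theorem \ref{decouple} with the correct scaling of $r$ versus $R=\mathrm{diam}(B_N)$ and checking that after removing the $r$-collar $B_N^r$ is still macroscopic, so that \eqref{LLN} can be invoked on it; since $r$ can be taken $\asymp R^\gamma = o(N)$, this causes no loss. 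Everything else is a routine union bound.
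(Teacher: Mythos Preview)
Your sandwich argument is the same as the paper's: $M_{D_N^\varepsilon}\le M_{Y_{N,\delta}}\le M_{D_N}$ with $D_N^\varepsilon=\{v:\textup{dist}(v,\partial D_N)\ge \varepsilon N\}$ (note $D_N^\varepsilon\subset Y_{N,\delta}$ since $\varepsilon N>N^{1-\delta}$ for large $N$). The difference is only in how the lower bound is justified. The paper's proof is two lines: both endpoints of the sandwich are already established in \cite{BW} --- the upper from \eqref{LLN} and the lower from the display below (5.19) there, which gives $M_{D_N^\varepsilon}/\log N\to 2\sqrt g$ in $L^2$ directly. You instead re-derive that lower bound from scratch via Miller's coupling on a sub-box $B_N$, controlling the harmonic correction and invoking \eqref{LLN} on the zero-boundary field in $B_N$. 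This works, but it is substantially more effort than needed here; in effect you are reproving a piece of \cite{BW} that the paper is content to cite.

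Two small points on your write-up. First, your description of the ``diagonal branch'' of $Y_{N,\delta}$ is slightly off: the second set in the definition of $Y_N$ consists of points whose nearest boundary is the \emph{top} side $T_N$, so the branch hugs the top (and the upper corners), not the other three sides; this does not affect your argument since you correctly place $B_N$ inside $D_N^\varepsilon$. Second, when you invoke \eqref{LLN} for $\max_{B_N^r}\phi^{B_N,0}$, note that \eqref{LLN} gives the max over all of $B_N$, not over $B_N^r$; you need an extra step (e.g.\ Lemma~\ref{thin} applied in $B_N$) to say the max is not in the $N^\gamma$-collar, which is routine but should be stated. Your uniform-integrability remarks for the $L^2$ upgrade are fine, though again the paper sidesteps this by citing the $L^2$ convergence directly from \cite{BW}.
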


\begin{proof}
Let $D_{N}^{\varepsilon }:=\left\{ v\in D_{N}:\textup{dist}(v,\partial
D_{N}) \geq \varepsilon N\right\} $. Since 
\begin{equation*}
\frac{M_{D_{N}^{\varepsilon }}}{\log N}\leq \frac{M_{Y_{N,\delta }}}{\log N}%
\leq \frac{M_{D_{N}}}{\log N},
\end{equation*}
the claim \eqref{eq-YLLN} follows from \cite{BW}, since the upper control on 
${M_{D_{N}}}/{\log N}$ follows from (\ref{LLN}) while the lower control on ${%
M_{D_{N}^{\varepsilon }}}/{\log N}$ follows from the display below (5.19) in 
\cite{BW}.
\end{proof}

We now switch to dyadic scales. For $n\in \mathbb{N}$, set $N=2^{n}$ and $%
m_{n}:=M_{Y_{2^{n},\delta }}$. We set up a recursion for $m_{n}$. Clearly, 
\begin{equation*}
\mathbb{E}m_{n+2} =\mathbb{E}M_{Y_{4N,\delta }} \geq \mathbb{E}\max \left\{
\max_{v\in Y_{N,\delta }^{\left( 1\right) }}\phi _{v}^{D_{4N},0},\max_{v\in
Y_{N,\delta }^{\left( 2\right) }}\phi _{v}^{D_{4N},0}\right\} ,
\end{equation*}
where $Y_{N,\delta }^{\left( i\right) }$ are the translations of $%
Y_{N,\delta }$, defined by $Y_{N,\delta }^{\left( 1\right) }=Y_{N,\delta
}+\left( -1.1N,3N\right) $, $Y_{N,\delta }^{\left( 2\right) }=Y_{N,\delta
}+\left( 1.1N,3N\right) $, see Figure \ref{fig:YNd1} 
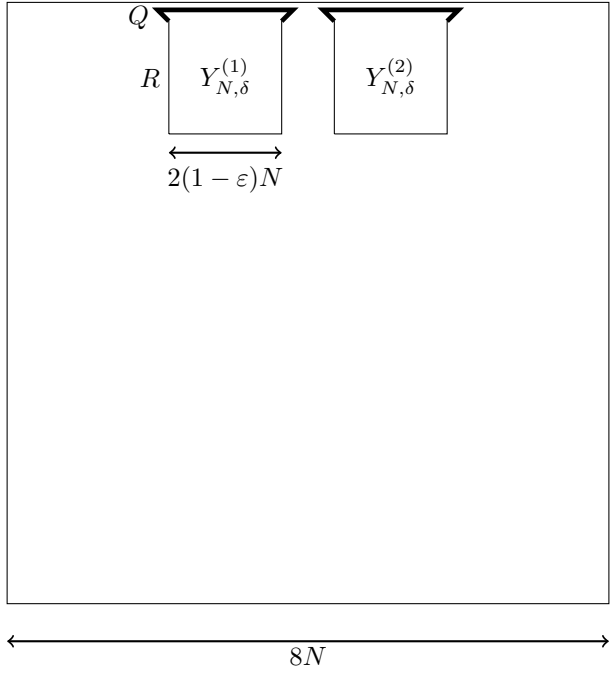
\begin{figure}[h]
\begin{center}
\begin{tikzpicture}
%  [ endstyle/.style={circle,fill=blue,inner sep=1.5pt}
%  ]

\draw (0,0) rectangle (8,8);
\draw (2.15,6.25) -- (3.65,6.25) -- (3.65,7.75) -- (3.8,7.9) -- (2,7.9) -- (2.15,7.75) -- (2.15,6.25);
\draw (4.35,6.25) -- (5.85,6.25) -- (5.85,7.75) -- (6,7.9) -- (4.2,7.9) -- (4.35,7.75) -- (4.35,6.25);
\draw[line width=1.8pt]  (3.65,7.75) -- (3.8,7.9) -- (2,7.9) -- (2.15,7.75);
\draw[line width=1.8pt] (5.85,7.75) -- (6,7.9) -- (4.2,7.9) -- (4.35,7.75);
\node at (2.9,7) {$Y_{N,\delta}^{(1)}$};
\node at (5.1,7) {$Y_{N,\delta}^{(2)}$};
\node at (1.75,7.8) {$Q$};
\node at (1.90,7) {$R$};
\node  at (4,-0.7) {$8N$};
\draw[thick,<->] (0,-0.5) -- (8,-0.5);
\draw[thick,<->] (2.15,6) -- (3.65,6);
\node at (2.9,5.65) {$2(1-\varepsilon)N$};
\end{tikzpicture}
\end{center}
\caption{The domains $Y_{N,\protect\delta}^{(i)}$, with the boundary pieces $%
R,Q$.}
\label{fig:YNd1}
\end{figure}

The next two lemmas will allow us to control the difference between $%
\phi^{D_{4N},0}$ and $\phi ^{D_{N},0}$ (and as a consequence, between $%
m_{n+2}$ and $m_{n}$).

\begin{lemma}
\label{A}There exist $\delta ^{\prime },1>\delta >\gamma >0$, such that the
following statement holds. Set $D_{N}^{\left( 1\right) }=D_{N}+\left(
-1.1N,3N\right) $, $D_{N}^{\left( 2\right) }=D_{N}+\left( 1.1N,3N\right) $.
Let $D_{N}^{\gamma ,\left( i\right) }:=\left\{ v\in D_{N}^{\left( i\right) }:%
\textup{dist}(v,\partial D_{N}^{\left( i\right) }) \geq N^{\gamma }\right\} 
$. Then there exists a coupling $\mathbb{P}$ of \newline
$\left( \phi ^{D_{4N,}0},\phi ^{D_{N}^{\left( 1\right) },0},\phi
^{D_{N}^{\left( 2\right) },0}\right) $ and an event $\mathcal{G}$ with $%
\mathbb{P}\left( \mathcal{G}^{c}\right) \leq N^{-\delta ^{\prime }}$, such
that with $h_{v}^{\left( i\right) }$ being harmonic functions in $%
D_{N}^{\left( i\right) }$ with boundary conditions $\phi ^{D_{4N,}0}-\phi
^{D_{N}^{\left( i\right) },0}$, on the event $\mathcal{G}$, we have%
\begin{equation*}
\phi _{v}^{D_{4N},0}=\phi _{v}^{D_{N}^{\left( i\right) },0}+h_{v}^{\left(
i\right) }\text{, for all }v\in Y_{N,\gamma }^{\left( i\right) },\text{ for }%
i=1,2.
\end{equation*}%
Moreover, there is a constant $C_{0}=C_{0}\left( \delta \right) $, such
that, for any $1>\delta >\gamma $, 
\begin{equation*}
\max_{\substack{ i=1,2  \\ v\in Y_{N,\delta }^{\left( i\right) }}} \textup{%
Var} \left( h_{v}^{\left( i\right) }\right) \leq C_{0}\left( \delta \right) .
\end{equation*}
\end{lemma}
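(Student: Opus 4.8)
The plan is to build the coupling in two stages by applying Miller's coupling (Theorem \ref{decouple}) twice, once for each translated box $D_N^{(i)}$, and then to control the variance of the resulting harmonic corrections using the Brascamp--Lieb inequality together with classical estimates for the discrete GFF harmonic extension.

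First I would construct the coupling and the event $\mathcal G$. Condition the field $\phi^{D_{4N},0}$ on its values on $\partial D_N^{(1)}$; by the domain Markov property the restriction of $\phi^{D_{4N},0}$ to $D_N^{(1)}$ is distributed as a Ginzburg--Landau field on $D_N^{(1)}$ with boundary data $f_1:=\phi^{D_{4N},0}|_{\partial D_N^{(1)}}$. The pointwise tail bound (Theorem \ref{thm: tail bound}) applied with a union bound over the $O(N)$ boundary vertices shows that, off an event of probability at most $N^{-c}$, we have $\max_{x\in\partial D_N^{(1)}}|f_1(x)|\le C\log N$, which is certainly bounded by $\Lambda|\log N|^\Lambda$; so Theorem \ref{decouple} applies with $R$ of order $N$, $r=N^\gamma$ (choosing $\gamma$ strictly between Miller's exponent and $\delta$), and yields a coupling of $(\phi^{D_{4N},0}|_{D_N^{(1)}},\phi^{D_N^{(1)},0})$ under which, with probability $\ge 1-c N^{-\delta'}$, one has $\phi^{D_{4N},0}=\phi^{D_N^{(1)},0}+h^{(1)}$ on $D_N^{\gamma,(1)}\supseteq Y_{N,\gamma}^{(1)}$, where $h^{(1)}$ is the discrete harmonic extension to $D_N^{\gamma,(1)}$ of the boundary difference. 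Doing the same for $i=2$ and then using the conditional independence of $\phi^{D_{4N},0}$ in $D_N^{(1)}$ and in $D_N^{(2)}$ given the outside (the two boxes are disjoint, since $1.1>1$) lets us splice the two couplings into a single coupling of the triple $(\phi^{D_{4N},0},\phi^{D_N^{(1)},0},\phi^{D_N^{(2)},0})$. Let $\mathcal G$ be the intersection of the two good events and the boundedness events; then $\mathbb P(\mathcal G^c)\le N^{-\delta'}$ after relabelling $\delta'$.

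For the variance bound, fix $i$ and $v\in Y_{N,\delta}^{(i)}$. By construction $h^{(i)}_v=\sum_{y\in\partial D_N^{\gamma,(i)}} \mathrm{HM}^{(i)}(v,y)\,(\phi^{D_{4N},0}_y-\phi^{D_N^{(i)},0}_y)$ where $\mathrm{HM}^{(i)}(v,\cdot)$ is the discrete harmonic measure from $v$ in $D_N^{\gamma,(i)}$; this is a linear functional of the (jointly distributed, but here we only need the marginal) Ginzburg--Landau fields, so by \eqref{eq: BL var bound} its variance is at most $c_-^{-1}$ times the corresponding GFF variance, and the GFF variance of $\langle\phi,\eta\rangle$ for $\eta$ supported on $\partial D_N^{\gamma,(i)}$ is in turn controlled by the GFF Green's function restricted to that boundary. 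The key geometric fact is that every $v\in Y_{N,\delta}^{(i)}$ has distance at least of order $N$ from all of $\partial D_N^{\gamma,(i)}$ except possibly from the portion lying along the top edge $Q$ of the translated box, from which its distance is at least $N^{1-\delta}$ (this is exactly what the definition of $Y_{N,\delta}$ via $\mathrm{dist}(v,T_N)>N^{1-\delta}$ buys us). A standard estimate for the discrete GFF — writing the harmonic extension difference in terms of Green's functions and using that $\log$-distances enter — then gives $\Var_{\GFF}(h^{(i)}_v)\le C\log\big(N/N^{1-\delta}\big)+C=C\delta\log N/\log N\cdot(\dots)$; more carefully, the contribution of each boundary piece at distance $d$ from $v$ is $O(\log(\mathrm{diam}/d))$, which is $O(1)$ for the far pieces and $O(\log(N/N^{1-\delta}))=O(\delta\log N)$ — so I would instead argue that the relevant quantity is the \emph{variance of the difference} of two harmonic extensions that agree except near $Q$, which collapses the $\log N$ and leaves a bound $C_0(\delta)$ depending only on $\delta$ (through the implied constants) and on $\kappa=c_+/c_-$.

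The main obstacle is precisely this last variance estimate near the top boundary piece $Q$: one must show that although $\phi^{D_{4N},0}$ and $\phi^{D_N^{(i)},0}$ differ by a harmonic function whose boundary values on the top are not small, the \emph{value at $v$} of this harmonic function still has $O(1)$ variance, uniformly in $N$, for $v$ at distance $\ge N^{1-\delta}$ from $Q$. The right way to see this is to note that $h^{(i)}_v$ equals the harmonic extension of $(\phi^{D_{4N},0}-\phi^{D_N^{(i)},0})|_{\partial D_N^{\gamma,(i)}}$, and on $\partial D_N^{\gamma,(i)}\setminus Q$ the two fields are already coupled to be equal (so the boundary data vanishes there off $\mathcal G^c$), hence $h^{(i)}_v$ is the harmonic extension of data supported on $Q$ only; its variance is then at most $c_-^{-1}\sum_{y,y'\in Q}\mathrm{HM}(v,y)\mathrm{HM}(v,y')G_{\GFF}(y,y')$, and since all $y\in Q$ are at distance $\gtrsim N^{1-\delta}$ from $v$ while $Q$ has diameter $\lesssim N$, the harmonic measure $\mathrm{HM}(v,\cdot)$ on $Q$ has total mass decaying like a power of $N^{-\delta}$ relative to... — at this point the clean route is to bound $\Var(h_v^{(i)})\le c_-^{-1}\Var_{\GFF}(h_v^{(i)})$ and then bound the latter by the variance of the DGFF on $D_N^{(i)}$ itself (which is $O(\log N)$) minus a matching term, or more simply by the Green's function estimate $G_{D_N^{\gamma,(i)}}(v,v)\le g\log\mathrm{dist}(v,\partial D_N^{\gamma,(i)})+O(1)$; combined with the observation that the harmonic part of $\phi^{D_{4N},0}$ at $v$ has variance $O(\log(N/N^{1-\delta})) = O(\delta\log N)$ — this still is not $O(1)$, so in fact the correct statement must use that $h_v^{(i)}$ is a \emph{difference}, and I would spend the bulk of the argument making the ``difference of Green's functions is $O(1)$'' step rigorous, via the representation of $h^{(i)}_v$ through random-walk hitting probabilities and a coupling of the two walks (from $D_{4N}$ and from $D_N^{(i)}$) that shows they agree with overwhelming probability before hitting the boundary, the discrepancy being confined to excursions near $Q$ which contribute only $O(1)$ to the variance because of the Brascamp--Lieb comparison and the logarithmic nature of the 2D Green's function.
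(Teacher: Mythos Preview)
Your construction of the coupling via the domain Markov property and Miller's theorem is fine and matches the paper. The gap is in the variance bound for $v$ near the top, and it stems from missing the single geometric fact that drives the whole construction: the translation $(\,\cdot\,,3N)$ is chosen precisely so that the \emph{top edge of $D_N^{(i)}$ coincides with the top edge of $D_{4N}$}. Consequently, for $v\in Q$ one has $\textup{dist}(v,\partial D_{4N})=\textup{dist}(v,T)$ and $\textup{dist}(v,\partial D_N^{\gamma,(i)})=\textup{dist}(v,T)-N^\gamma$, and these two distances differ only by $N^\gamma$, not by a factor of order $N^{\delta}$. This is exactly why the definition of $Y_{N,\delta}$ insists that near the top the closest boundary of $D_N$ be the top itself. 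Once you use this, the paper's decomposition $h^{(i)}_v=\hat h^{(i)}_v-\tilde h^{(i)}_v$ (harmonic extensions of $\phi^{D_{4N},0}$ and of $\phi^{D_N^{(i)},0}$ separately) together with the GFF orthogonal decomposition gives
\[
\Var_{\GFF}\big(\hat h^{(i)}_v\big)=\Var_{\GFF}\big(\phi_v^{D_{4N},0}\big)-\Var_{\GFF}\big(\phi_v^{D_N^{\gamma,(i)},0}\big)
= g_0\log\frac{\textup{dist}(v,T)}{\textup{dist}(v,T)-N^\gamma}+o(1)=O(N^{\gamma-\delta}),
\]
which is $o(1)$, not $O(\delta\log N)$. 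Your computation $O(\log(N/N^{1-\delta}))$ is what you would get if the tops did \emph{not} coincide, and that is why you could not close the argument.

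Two further remarks. First, your claim that ``on $\partial D_N^{\gamma,(i)}\setminus Q$ the two fields are already coupled to be equal'' is incorrect: Miller's coupling asserts $\phi^{D_{4N},0}=\phi^{D_N^{(i)},0}+h^{(i)}$ on the good event, not that the boundary difference vanishes anywhere; the harmonic correction $h^{(i)}$ is genuinely nonzero on all of $\partial D_N^{\gamma,(i)}$. Second, the random-walk-coupling route you sketch at the end is unnecessary once you exploit the shared top edge; the whole variance bound reduces to two applications of the Green's function asymptotic $\Var_{\GFF}(\phi_v^{D,0})=g_0\log\textup{dist}(v,\partial D)+O(1)$, one on $R$ (where both distances are of order $N$) and one on $Q$ (where both distances are the same up to $N^\gamma$).
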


\begin{lemma}
\label{B} With notation as in Lemma \ref{A}, there exists a constant $%
C_{1}<\infty $, such that%
\begin{equation*}
\mathbb{E}\min_{i}\min_{v\in Y_{N,\delta }^{\left( i\right) }}h_{v}^{\left(
i\right) }=-\mathbb{E}\max_{i}\max_{v\in Y_{N,\delta }^{\left( i\right)
}}h_{v}^{\left( i\right) }\geq -C_{1}.
\end{equation*}
\end{lemma}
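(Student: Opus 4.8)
\textbf{Proof plan for Lemma \ref{B}.}
The equality $\mathbb{E}\min_{i}\min_{v\in Y_{N,\delta }^{(i)}}h_{v}^{(i)}=-\mathbb{E}\max_{i}\max_{v\in Y_{N,\delta }^{(i)}}h_{v}^{(i)}$ is immediate once we observe that the family $\{h_{v}^{(i)}\}$ is symmetric in law under a global sign flip: indeed, $h^{(i)}$ is the (deterministic, linear) discrete harmonic extension of the boundary data $\phi^{D_{4N},0}-\phi^{D_{N}^{(i)},0}$, and the joint law of $(\phi^{D_{4N},0},\phi^{D_{N}^{(1)},0},\phi^{D_{N}^{(2)},0})$ under the coupling of Lemma \ref{A} is invariant under negating all three fields, since the potential $V$ is even by \eqref{a1}. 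Hence it suffices to prove the one-sided bound $\mathbb{E}\max_{i}\max_{v\in Y_{N,\delta }^{(i)}}h_{v}^{(i)}\leq C_{1}$.

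For the upper bound, the plan is a first-moment / entropy estimate. First I would record that each $h_{v}^{(i)}$ has a centered distribution: by symmetry of $V$, $\mathbb{E}\,\phi^{D_{4N},0}=\mathbb{E}\,\phi^{D_{N}^{(i)},0}=0$, so $\mathbb{E}\,h_{v}^{(i)}=0$ for every $v$. Combined with the variance bound $\operatorname{Var}(h_{v}^{(i)})\leq C_{0}(\delta)$ from Lemma \ref{A}, which holds uniformly over $v\in Y_{N,\delta }^{(i)}$, I would like to conclude that $\mathbb{E}\max h_{v}^{(i)}$ is $O(\sqrt{\log N})$ at worst; but $O(\sqrt{\log N})$ is not good enough — we need a constant. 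The right tool is a Gaussian-type concentration bound on the whole maximum, obtained via the Brascamp–Lieb exponential inequality \eqref{eq: BL exp bound}. Writing $h_{v}^{(i)}=\langle \phi^{D_{4N},0},\eta^{(i)}_{v}\rangle-\langle \phi^{D_{N}^{(i)},0},\tilde\eta^{(i)}_{v}\rangle$ for suitable vectors $\eta$ supported on the relevant boundaries (the harmonic extension is a fixed linear functional of the boundary values, and the boundary values of $\phi^{D_{N}^{(i)},0}$ on $\partial D_{N}^{(i)}$ vanish, so in fact only $\phi^{D_{4N},0}$ enters), one gets sub-Gaussian tails for each $h_{v}^{(i)}$ with variance proxy $c_{-}^{-1}\operatorname{Var}_{\GFF}(h_{v}^{(i)})\leq C_{0}'(\delta)$. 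A union bound over the at most $CN^{2}$ points then gives $\mathbb{P}(\max h > t)\leq CN^{2}e^{-t^{2}/(2C_{0}')}$, which is summable for $t=C_{1}\geq C\sqrt{\log N}$ — again only $O(\sqrt{\log N})$.

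To actually reach a constant, the crucial extra input must be that the relevant maximum is over $v\in Y_{N,\delta}^{(i)}$, whose points lie at distance $\geq N^{1-\delta}$ from the top boundary $T_{N}$ and within the bulk otherwise, so the GFF variance of $h_{v}^{(i)}$ is not merely bounded but the harmonic extension $\hat{\phi}$ of the boundary difference is evaluated deep inside; moreover the \emph{modulus of continuity} of $v\mapsto h_{v}^{(i)}$ is controlled — discrete harmonic functions have gradients bounded (in $L^2$) by their oscillation divided by the distance to the boundary — so the maximum over $Y_{N,\delta}^{(i)}$ is essentially a maximum over $O(1)$ effective degrees of freedom once one chains at the scale $N^{1-\delta}$. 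Concretely, I would cover $Y_{N,\delta}^{(i)}$ by $O(1)$ balls of radius $c N^{1-\delta}$, on each of which $h^{(i)}$ varies by an $O(1)$ amount with overwhelming probability (via Brascamp–Lieb applied to the increments $h_{v}^{(i)}-h_{w}^{(i)}$, whose GFF variance is $O((|v-w|/N^{1-\delta})^{2})=O(1)$), and then take a union bound over only the $O(1)$ centers, whose individual tails are sub-Gaussian with $O(1)$ variance proxy. This yields $\mathbb{P}(\max_{i}\max_{v}h_{v}^{(i)}>t)\leq C e^{-t^{2}/C}$, hence $\mathbb{E}\max_{i}\max_{v}h_{v}^{(i)}\leq C_{1}<\infty$ uniformly in $N$. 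Integrating $\mathbb{E}X\leq \mathbb{E}\max(X,0)\le \int_0^\infty \mathbb{P}(\max>t)\,dt$ finishes the one-sided bound, and the symmetry argument above then delivers the full statement.

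\textbf{Main obstacle.} The delicate point is the chaining/covering step: one must show that the discrete harmonic functions $h^{(i)}$, whose boundary data are the \emph{differences} $\phi^{D_{4N},0}-\phi^{D_{N}^{(i)},0}$ (large fields, of typical size $\sqrt{\log N}$ on the boundary), nonetheless have $O(1)$ oscillation over the region $Y_{N,\delta}^{(i)}$ which stays at macroscopic or polynomial distance from that boundary. This requires combining the harmonic-measure decay (Poisson kernel estimates, so that distant boundary fluctuations are averaged out) with the Brascamp–Lieb variance bound on the boundary field itself, and then upgrading the resulting GFF-variance control of increments into a genuine uniform (in $v$, and in $N$) constant — precisely the content encapsulated by the second half of Lemma \ref{A}, which must be leveraged not just pointwise but for increments. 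Getting the powers of $N^{1-\delta}$ and $N^{\gamma}$ to cooperate so that the number of covering balls stays $O(1)$ while each ball's oscillation stays $O(1)$ is where the choice of $\delta$ (and the definition of $Y_{N,\delta}$ with its $N^{1-\delta}$ buffer from $T_N$) is used essentially.
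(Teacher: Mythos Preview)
Your symmetry argument for the equality is fine, and you correctly diagnose that sub-Gaussian tails with $O(1)$ variance plus a union bound over $\sim N^{2}$ points only yields $O(\sqrt{\log N})$. The gap is in your remedy. The set $Y_{N,\delta}^{(i)}$ contains the full bulk square $D_{N}^{\varepsilon,(i)}$ of side $\sim 2(1-\varepsilon)N$, so it cannot be covered by $O(1)$ balls of radius $cN^{1-\delta}$: you need $\sim N^{2\delta}$ such balls, and a union bound over $N^{2\delta}$ centers with $O(1)$ variance proxy again returns only $O(\sqrt{\log N})$. Your increment estimate $\Var_{\GFF}(h_{v}^{(i)}-h_{w}^{(i)})=O((|v-w|/N^{1-\delta})^{2})$ is also asserted without justification; the paper establishes only a linear bound $K|u-v|/(\varepsilon N)$, and with a linear bound a single-scale covering does not suffice.

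The paper's route differs in two essential ways. First, since $h^{(i)}$ is harmonic, the maximum principle reduces the problem to the one-dimensional boundary $\partial Y_{N,\delta}^{(i)}=Q\cup R$. Second --- and this is the point your proposal misses entirely --- one decomposes $h^{(i)}=\hat h^{(i)}-\tilde h^{(i)}$ (harmonic extensions into $D_{N}^{\gamma,(i)}$ of $\phi^{D_{4N},0}$ and of $\phi^{D_{N}^{(i)},0}$ respectively) and exploits the geometric fact that the \emph{top side of $D_{N}^{(i)}$ coincides with the top side of $D_{4N}$}. Because of this alignment, on $Q$ the GFF-variance of $\hat h^{(i)}$ is not merely $O(1)$ but polynomially small, $O(N^{\gamma-\delta})$; the same holds for $\tilde h^{(i)}$ on all of $Q\cup R$. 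A crude union bound over the $O(N)$ boundary points then already gives exponential tails there. Only $\hat h^{(i)}$ on $R$ has merely $O(1)$ variance; for that piece one proves $\Var_{\GFF}(\hat h_{u}-\hat h_{v})\leq K|u-v|/(\varepsilon N)$ via Green's function asymptotics and runs a dyadic (multi-scale) chaining over the one-dimensional set $R$. The $N^{1-\delta}$ buffer is thus not a chaining scale at all: its role is to push $Q$ far enough from the common top boundary that the $Q$-variance becomes polynomially small and no chaining is needed there.
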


The proof of Lemmas \ref{A} and \ref{B} are postponed to Section \ref%
{sec-LemAB}. In the rest of this section, we bring the proof of Theorem \ref%
{main}.

\begin{proof}[Proof of Theorem \protect\ref{main}]
Denote by $m_{n}^{\ast }$ an independent copy of $m_{n}$. We combine Lemmas %
\ref{A} and \ref{B} to conclude%
\begin{eqnarray*}
\mathbb{E}m_{n+2} &\geq &\mathbb{E}\left[ 1_{\mathcal{G}}\max_{i}\max_{v\in
Y_{N,\delta }^{\left( i\right) }}\left( \phi _{v}^{D_{N}^{\left( i\right)
},0}+h_{v}^{\left( i\right) }\right) \right] \\
&\geq &\mathbb{E}\max \left\{ m_{n},m_{n}^{\ast }\right\} +2\mathbb{E}%
\min_{i}\min_{v\in Y_{N,\delta }^{\left( i\right) }}h_{v}^{\left( i\right)
}-2\mathbb{E}\left[ 1_{\mathcal{G}^{c}}m_{n}\right] .
\end{eqnarray*}%
We apply (\ref{LLN}) to conclude that 
\begin{equation*}
\mathbb{E}\left[ 1_{\mathcal{G}^{c}}m_{n}\right] \leq \mathbb{P}\left( 
\mathcal{G}^{c}\right) ^{1/2}\mathbb{E}\left[ m_{n}^{2}\right] ^{1/2}\leq C%
\frac{\log N}{N^{\delta ^{\prime }/2}}
\end{equation*}%
Thus for all large $n$, we can apply Lemma \ref{B} to get 
\begin{equation*}
\mathbb{E}m_{n+2}\geq \mathbb{E}\max \left\{ m_{n},m_{n}^{\ast }\right\}
-3C_{1}.
\end{equation*}%
Using $\max \left\{ a,b\right\} =\frac{1}{2}\left( a+b+\left\vert
a-b\right\vert \right) $ and Jensen's inequality, we obtain%
\begin{equation}  \label{eq-ouf}
\mathbb{E}m_{n+2}-\mathbb{E}m_{n} \geq \frac{1}{2}\mathbb{E}\left\vert
m_{n}-m_{n}^{\ast }\right\vert -3C_{1} \geq \frac{1}{2}\mathbb{E}\left\vert
m_{n}-\mathbb{E}m_{n}^{\ast }\right\vert -3C_{1}.
\end{equation}
We need the following lemma.

\begin{lemma}
\label{subseq}There exists a sequence $\left\{ n_{k}\right\} $ and a
constant $K<\infty $ such that%
\begin{equation*}
\mathbb{E}m_{n_{k}+2}\leq \mathbb{E}m_{n_{k}}+K.
\end{equation*}
\end{lemma}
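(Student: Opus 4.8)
The plan is to obtain Lemma \ref{subseq} from Lemma \ref{YLLN} alone, via a Dekking--Host style pigeonhole argument on the means. Write $a_n := \mathbb{E}m_n$. The only consequence of Lemma \ref{YLLN} that I would use is the asymptotics of $a_n$: since convergence in $L^2$ entails convergence of expectations, and $\log 2^n = n\log 2$, Lemma \ref{YLLN} gives $a_n/n \to \lambda := 2\sqrt{g}\,\log 2$ as $n\to\infty$, where $\lambda\in(0,\infty)$. I would also record the trivial bound $a_n \ge \mathbb{E}\,\phi^{D_{2^n},0}_v = 0$, valid for any fixed vertex $v$ because the symmetry $V(x)=V(-x)$ makes $\phi^{D_{2^n},0}$ centered; thus $(a_n)$ is a nonnegative sequence with $a_n/n\to\lambda$.

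Next I would argue by contradiction. Suppose no sequence $\{n_k\}$ and constant $K$ as in the statement exist; then for every $K$ the set $\{n:\ a_{n+2}\le a_n+K\}$ is finite. Fix $K:=2\lambda+1$. There is then an $n_0$ such that $a_{n+2}>a_n+K$ for all $n\ge n_0$, and telescoping this inequality along the progression $n_0, n_0+2,\dots,n_0+2j$ gives $a_{n_0+2j} > a_{n_0}+jK$ for every $j\ge1$. Since $\{n_0+2j\}_{j\ge1}$ is a subsequence of $\mathbb{N}$, dividing by $n_0+2j$ and letting $j\to\infty$ forces
\begin{equation*}
\lambda\ =\ \lim_{j\to\infty}\frac{a_{n_0+2j}}{n_0+2j}\ \ge\ \lim_{j\to\infty}\frac{a_{n_0}+jK}{n_0+2j}\ =\ \frac{K}{2}\ =\ \lambda+\tfrac12,
\end{equation*}
which is absurd. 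Hence a sequence $\{n_k\}$ and constant $K$ as claimed exist.

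I do not foresee any genuine obstacle here; the argument is soft given Lemma \ref{YLLN}. The points needing a little attention are (i) passing from the $L^2$ statement to convergence of the means $a_n/n$ (immediate from $\|\cdot\|_1\le\|\cdot\|_2$), (ii) observing that the arithmetic progression produced by telescoping is itself a subsequence, so the limiting rate $\lambda$ still applies along it, and (iii) taking $K$ strictly larger than $2\lambda$ so that the rate $K/2$ manufactured by the telescoping strictly exceeds the true rate $\lambda$. If one wanted the sharper statement announced after Theorem \ref{main} --- that the admissible indices can be chosen with density arbitrarily close to $1$ --- one would additionally invoke the near-monotonicity $a_{n+2}\ge a_n-3C_1$ coming from \eqref{eq-ouf} and apply Markov's inequality to the increments $a_{n+2}-a_n$, whose Ces\`aro averages tend to $2\lambda$; but this refinement is not needed for Lemma \ref{subseq} itself.
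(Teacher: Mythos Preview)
Your proof is correct and is genuinely simpler than the paper's. You use only the consequence $a_n/n\to\lambda$ of Lemma \ref{YLLN} and a direct contradiction via telescoping; the paper instead works quantitatively, defining the set $I_{n,K}=\{j\in\{n,n+2,\dots,2n\}:a_{j+2}\ge a_j+K\}$ and combining $a_n/n\to 2\sqrt g$ with the near-monotonicity $a_{n+2}\ge a_n-3C_1$ from \eqref{eq-ouf} to bound $|I_{n,K}|\le n/4$ for suitable $K$. What the paper's approach buys is exactly the density statement you mention in your last paragraph (the ``good'' indices have density arbitrarily close to $1$), which is advertised right after Theorem \ref{main}. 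For the bare existence claim of Lemma \ref{subseq}, your argument suffices and avoids invoking \eqref{eq-ouf} altogether. One cosmetic remark: the nonnegativity $a_n\ge 0$ you record is correct but never used, so you could drop it.
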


\begin{proof}[Proof of Lemma \protect\ref{subseq}]
Let $I_{n,K}=\left\{ j\in \{n,n+2,\ldots ,2n\}:\mathbb{E}m_{j+2}\geq \mathbb{%
E}m_{j}+K\right\} $. Then Lemma \ref{YLLN} implies that $\mathbb{E}%
m_{n}/n\rightarrow 2\sqrt{g}$, while \eqref{eq-ouf} gives $\mathbb{E}%
m_{n+2}\geq \mathbb{E}m_{n}-3C_{1}$. Therefore, for any fixed $\eta >0$ and
all large $n$, 
\begin{equation*}
K|I_{n,K}|-3C_{1}(\frac{n}{2}-|I_{n,K}|)\leq 2\sqrt{g}n(1+\eta )\leq 4\sqrt{g%
}n,
\end{equation*}%
giving that for all large $n$, 
\begin{equation*}
|I_{n,K}|\leq n\frac{4\sqrt{g}+\frac{3}{2C_{1}}}{K+3C_{1}}.
\end{equation*}%
Choosing $K>16\sqrt{g}+3C_{1}$ gives that for all $n$ large, $|I_{n,K}|\leq
n/4$. %\begin{equation*}
%\limsup_{n\rightarrow \infty }\frac{\left\vert I_{n,K}\right\vert }{2n}\leq 
%\frac{2\sqrt{g}}{K}.
%\end{equation*}%
It follows that for all large $n$, there exists $n^{\prime }\in \left[ n,2n%
\right] $, such that%
\begin{equation*}
\mathbb{E}m_{n^{\prime }+2}\leq \mathbb{E}m_{n^{\prime }}+K,
\end{equation*}%
This completes the proof of Lemma \ref{subseq}.
\end{proof}

We continue with the proof of Theorem \ref{main}. Using the subsequence $%
\left\{ n_{k}\right\} $ from Lemma \ref{subseq}, we have from \eqref{eq-ouf}
that 
\begin{equation*}
\mathbb{E}\left\vert m_{n_{k}}-\mathbb{E}m_{n_{k}}^{\ast }\right\vert \leq
2K+6C_{1},
\end{equation*}%
which implies, using Jensen's inequality, that $\left\{ m_{n_{k}}-\mathbb{E}%
m_{n_{k}}^{\ast }\right\} $ is tight. This implies that the the sequence of
random variables%
\begin{equation*}
\bar M_{D_{N_{k}}^{\delta }}:=\max \left\{ \phi _{v}^{D_{N_{k}},0}:v\in
D_{N_{k}},\textup{dist}(v,\partial D_{N_{k}}) \geq N_{k}^{1-\delta }\right\}
\end{equation*}%
is tight around its mean because $\bar M_{D_{N_{k}}^{\delta }}$ is the
maximum of 4 rotated copies of $m_{n_{k}}$.

Finally, combining (\ref{LLN}) and Lemma \ref{thin} we obtain%
\begin{equation*}
\mathbb{P}\left( M_{D_{N_{k}}}>\bar{M}_{D_{N_{k}}^{\delta }}\right) \leq
2^{n_{k}(\delta -1)/2},
\end{equation*}%
and 
\begin{eqnarray*}
\mathbb{E}M_{D_{N_{k}}}-\mathbb{E}\bar{M}_{D_{N_{k}}^{\delta }} &\leq &%
\mathbb{E}M_{D_{N_{k}}}1_{\left\{ M_{D_{N_{k}}}>\bar{M}_{D_{N_{k}}^{\delta
}}\right\} } \\
&\leq &\mathbb{P}\left( M_{D_{N_{k}}}>\bar{M}_{D_{N_{k}}^{\delta }}\right)
^{1/2}\left( \mathbb{E}M_{D_{N_{k}}}^{2}\right) ^{1/2} \\
&\leq &2^{n_{k}\frac{\delta -1}{4}}O\left( \log N_{k}\right) \rightarrow 0.
\end{eqnarray*}%
We conclude that the sequence $\left\{ M_{D_{N_{k}}}-\mathbb{E}%
M_{D_{N_{k}}}\right\} $ is tight.
\end{proof}

\section{Proof of Lemma \protect\ref{A} and \protect\ref{B}}

\label{sec-LemAB}

\begin{proof}[Proof of Lemma \protect\ref{A}]
The existence of the harmonic decomposition is implied by the Markov
property and Theorem \ref{decouple} (with $\delta ^{\prime },\gamma $ taken
as the constants in Theorem \ref{decouple}). It thus suffices to obtain an
upper bound for Var$\left( h_{v}^{\left( i\right) }\right) $. Write $%
h_{v}^{\left( i\right) }=\hat{h}_{v}^{\left( i\right) }-\tilde{h}%
_{v}^{\left( i\right) }$, where $\hat{h}_{v}^{\left( i\right) }$ is the
harmonic function in $D_{N}^{\gamma ,\left( i\right) }$ with boundary value $%
\phi ^{D_{4N},0}$, and $\tilde{h}_{v}^{\left( i\right) }$ is the harmonic
function in $D_{N}^{\gamma ,\left( i\right) }$ with boundary value $\phi
^{D_{N}^{\left( i\right) },0}$. Without loss of generality we set $i=1$.
Applying the Brascamp-Lieb inequality (\ref{eq: BL var bound}) we get%
\begin{equation*}
\textup{Var} \left( h_{v}^{\left( 1\right) }\right) \leq c_{-}^{-1}\textup{%
Var}_\textup{GFF} \left( h_{v}^{\left( 1\right) }\right) .
\end{equation*}%
The orthogonal decomposition for GFF implies%
\begin{eqnarray*}
\textup{Var}_\textup{GFF} \left( \hat{h}_{v}^{\left( 1\right) }\right) &=&%
\textup{Var}_\textup{GFF} \left( \mathbb{E}_{\textup{GFF}}\left[ \phi
_{v}^{D_{4N},0}|\mathcal{F}_{\partial D_{N}^{\gamma ,\left( 1\right) }}%
\right] \right) \\
&=&\textup{Var}_\textup{GFF} \left[ \phi _{v}^{D_{4N},0}\right] -\textup{%
Var}_\textup{GFF} \left[ \phi _{v}^{D_{N}^{\gamma ,\left( 1\right) },0}%
\right]
\end{eqnarray*}%
and%
\begin{equation*}
\textup{Var}_\textup{GFF} \left( \tilde{h}_{v}^{\left( 1\right) }\right) =%
\textup{Var}_\textup{GFF} \left[ \phi _{v}^{D_{N}^{\left( 1\right) },0}%
\right] -\textup{Var}_\textup{GFF} \left[ \phi _{v}^{D_{N}^{\gamma ,\left(
1\right) },0}\right] .
\end{equation*}%
We now estimate the last two expressions for different regions of $v\in
Y_{N,\delta }^{\left( 1\right) }$. First of all, it suffices to control $%
h_{v}^{\left( 1\right) }$ for $v\in \partial Y_{N,\delta }^{\left( 1\right)
} $. Let 
\begin{eqnarray*}
Q &:&=\left\{ v\in \partial Y_{N,\delta }^{\left( 1\right) }:\textup{dist}%
(v,\partial D_{N}) =\textup{dist}(v,T) \right\} , \\
R &:&=\left\{ v\in \partial Y_{N,\delta }^{\left( 1\right) }:\textup{dist}%
(v,\partial D_{N}) \text{ =}\varepsilon N\right\} .
\end{eqnarray*}%
We first show that%
\begin{eqnarray}
\max_{v\in R}\textup{Var}_\textup{GFF} \left( \hat{h}_{v}^{\left( 1\right)
}\right) &\leq &C\left( \varepsilon \right) ,  \notag \\
\max_{v\in Q\cup R}\textup{Var}_\textup{GFF} \left( \tilde{h}_{v}^{\left(
1\right) }\right) &\leq &C_{0}N^{\gamma -\delta }.  \label{vartilt}
\end{eqnarray}%
Indeed, standard asymptotics for the lattice Green's function (following
e.g. from \cite[Proposition 1.6.3]{Law13}) give, for some constant $g_{0}$, 
\begin{eqnarray*}
&&\!\!\!\!\!\!\textup{Var}_\textup{GFF} \left[ \phi _{v}^{D_{4N},0}\right]
-\textup{Var}_\textup{GFF} \left[ \phi _{v}^{D_{N}^{\gamma ,\left(
1\right) },0}\right] \\
&=&g_{0}\left( \log \textup{dist}(v,\partial D_{4N}) -\log \textup{dist}%
(v,\partial D_{N}^{\gamma ,\left( 1\right) }) \right) +o_{N}\left( 1\right)
\\
&\leq &g_{0}\log \frac{4N}{\varepsilon N-N^{\gamma }}+o_{N}\left( 1\right)
\leq C(\varepsilon ),
\end{eqnarray*}%
and similarly,%
\begin{eqnarray*}
&&\!\!\!\!\!\!\textup{Var}_\textup{GFF} \left[ \phi _{v}^{D_{N}^{\left(
1\right) },0}\right] -\textup{Var}_\textup{GFF} \left[ \phi
_{v}^{D_{N}^{\gamma ,\left( 1\right) },0}\right] \\
&=&g_{0}\left( \log \textup{dist}(v,\partial D_{N^{{}}}^{\left( 1\right) })
-\log \textup{dist}(v,\partial D_{N}^{\gamma ,\left( 1\right) }) \right)
+O\left( N^{-1}\right) \\
&\leq &g_{0}\log \frac{N^{\delta }}{N^{\delta }-N^{\gamma }}+O\left(
N^{-1}\right) \leq C_{0}N^{\gamma -\delta }.
\end{eqnarray*}

To conclude the proof, we also claim for $\delta \in \left( \gamma ,1\right) 
$%
\begin{equation}
\max_{v\in Q}\textup{Var}_\textup{GFF} \left( \hat{h}_{v}^{\left( 1\right)
}\right) \leq CN^{\gamma -\delta }.  \label{varq}
\end{equation}%
Indeed, denote by $T_{\gamma }$ the top boundary of $D_{N}^{\gamma }$, we
apply asymptotics for lattice Green's function to obtain%
\begin{eqnarray*}
&&\!\!\!\!\!\!\textup{Var}_\textup{GFF} \left[ \phi _{v}^{D_{4N},0}\right]
-\textup{Var}_\textup{GFF} \left[ \phi _{v}^{D_{N}^{\gamma },0}\right] \\
& &=g_{0}\left( \log \textup{dist}(v,\partial D_{4N}) -\log \textup{dist}%
(v,\partial D_{N}^{\gamma }\right) ) +O\left( N^{-1}\right) \\
&&=g_{0}\left( \log \textup{dist}(v,T) -\log \textup{dist}( v,T_{\gamma })
\right) +O\left( N^{-1}\right) .
\end{eqnarray*}%
Since 
\begin{equation*}
\log \frac{\textup{dist}(v,T) }{\textup{dist}(v,T_{\gamma }) }\leq \log 
\frac{N^{\delta }}{N^{\delta }-N^{\gamma }}\leq CN^{\gamma -\delta },
\end{equation*}%
we obtain (\ref{varq}).
\end{proof}

\begin{proof}[Proof of Lemma \protect\ref{B}]
Recall that $h_v^{(i)}=\hat{h}_v^{(i)}-\tilde h_{v}^{(i)}$. We will prove
that there exist $C_{0}<\infty $ and $\alpha >0$, such that for all $%
C_{1}>C_{0}$, 
\begin{eqnarray}
\mathbb{P}\left( \max_{v\in Q}\hat{h}_{v}^{\left( 1\right) }>C_{1}\right)
&\leq &e^{-\alpha C_{1}},  \label{qtail} \\
\mathbb{P}\left( \max_{v\in R}\hat{h}_{v}^{\left( 1\right) }>C_{1}\right)
&\leq &e^{-\alpha C_{1}},  \label{rtail} \\
\mathbb{P}\left( \min_{v\in Q\cup R}\tilde{h}_{v}^{\left( 1\right)
}<-C_{1}\right) &\leq &e^{-\alpha C_{1}}.  \label{tilth}
\end{eqnarray}%
Indeed, (\ref{qtail}) follows from (\ref{varq}) and the exponential
Brascamp-Lieb inequality (\ref{eq: BL exp bound}):%
\begin{eqnarray*}
\mathbb{P}\left( \max_{v\in Q}\hat{h}_{v}^{\left( 1\right) }>C_{1}\right)
&\leq &\left\vert Q\right\vert \max_{v\in Q} \mathbb{P}\left( \hat{h}%
_{v}^{\left( 1\right) }>C_{1}\right) \\
&\leq &C_{3}N\exp \left( -\frac{C_{1}^{2}}{C_{2}\textup{Var}_\textup{GFF}
\left( \hat{h}_{v}^{\left( 1\right) }\right) }\right) \\
&\leq &C_{3}N\exp \left( -\frac{C_{1}^{2}}{C_{2}}N^{\delta -\gamma }\right) ,
\end{eqnarray*}%
where $C_2,C_3$ are some fixed constants. The same argument using (\ref%
{vartilt}) gives (\ref{tilth}).

We now prove (\ref{rtail}) using chaining. Omitting the superscripts $\left(
1\right) $ in $\hat{h}^{\left( 1\right) }$ and $D_{N}^{\gamma ,\left(
1\right) }$, we claim that there exists $K<\infty $, such that for $u,v\in R$%
, 
\begin{equation}
\textup{Var}_\textup{GFF} \left[ \hat{h}_{u}-\hat{h}_{v}\right] \leq K%
\frac{\left\vert u-v\right\vert }{\varepsilon N}.  \label{chain}
\end{equation}%
Applying the orthogonal decomposition of the DGFF we obtain%
\begin{equation*}
\phi _{u}^{D_{4N},0}-\phi _{v}^{D_{4N},0}=\phi _{u}^{D_{N}^{\gamma },0}-\phi
_{v}^{D_{N}^{\gamma },0}+\hat{h}_{u}-\hat{h}_{v},
\end{equation*}%
and therefore, by the independence of $\phi _{u}^{D_{N}^{\gamma },0}-\phi
_{v}^{D_{N}^{\gamma },0}$ and $\hat{h}_{u}-\hat{h}_{v}$ under the DGFF
measure, 
\begin{equation}
\textup{Var}_\textup{GFF} \left[ \hat{h}_{u}-\hat{h}_{v}\right] =\textup{%
Var}_\textup{GFF} \left[ \phi _{u}^{D_{4N},0}-\phi _{v}^{D_{4N},0}\right] - 
\textup{Var}_\textup{GFF} \left[ \phi _{u}^{D_{N}^{\gamma },0}-\phi
_{v}^{D_{N}^{\gamma },0}\right] .  \label{diff}
\end{equation}%
We now apply the representation of the lattice Green's function, 
%to the right hand
%side above (
see, e.g., \cite[Proposition 1.6.3]{Law13},%
\begin{equation*}
G^{D_{N}}( u,v) =\sum_{y\in \partial D_{N}}H_{\partial D_{N}}( u,y) a( y-v)
-a( u-v) ,
\end{equation*}%
where $H_{\partial D_{N}}\left( u,\cdot \right) $ is the harmonic measure of 
$D_{N}$ seen at $u$ and $a$ is the potential kernel on $\mathbb{Z}^{2}$
which satisfies the asymptotics%
\begin{equation*}
a\left( x\right) =\frac{2}{\pi }\log \left\vert x\right\vert +D_{0}+O\left(
\left\vert x\right\vert ^{-2}\right) ,
\end{equation*}
where $D_0$ is an explicit constant (see e.g. \cite[Page 39]{Law13} for a
slightly weaker result which nevertheless is sufficient for our needs).
Substituting into (\ref{diff}), we see that 
\begin{eqnarray*}
&&\textup{Var}_\textup{GFF} \left[ \phi _{u}^{D_{4N},0}-\phi
_{v}^{D_{4N},0}\right] - \textup{Var}_\textup{GFF} \left[ \phi
_{u}^{D_{N}^{\gamma },0}-\phi _{v}^{D_{N}^{\gamma },0}\right] \\
&=&G^{D_{4N}}\left( u,u\right) +G^{D_{4N}}\left( v,v\right)
-2G^{D_{4N}}\left( u,v\right) \\
&&-\left( G^{D_{N}^{\gamma }}\left( u,u\right) +G^{D_{N}^{\gamma }}\left(
v,v\right) -2G^{D_{N}^{\gamma }}\left( u,v\right) \right) \\
&=&\sum_{z\in \partial D_{4N}}H_{\partial D_{4N}}\left( u,z\right) a\left(
u-z\right) +\sum_{z\in \partial D_{4N}}H_{\partial D_{4N}}\left( v,z\right)
a\left( v-z\right) \\
&& \quad \quad \quad -2\sum_{z\in \partial D_{4N}}H_{\partial D_{4N}}\left(
u,z\right) a\left( v-z\right) \\
&&-\sum_{z\in \partial D_{N}^{\gamma }}H_{\partial D_{N}^{\gamma }}\left(
u,z\right) a\left( u-z\right) -\sum_{z\in \partial D_{N}^{\gamma
}}H_{\partial D_{N}^{\gamma }}\left( v,z\right) a\left( v-z\right) \\
&&\quad \quad \quad +2\sum_{z\in \partial D_{N}^{\gamma }}H_{\partial
D_{N}^{\gamma }}\left( u,z\right) a\left( v-z\right) \\
&:&=A_{D_{4N}}-A_{D_{N}^{\gamma }}
\end{eqnarray*}%
We now apply the Harnack inequality, see \cite[Theorem 1.7.1]{Law13},%
\begin{equation*}
\left\vert H_{\partial D_{4N}}\left( u,z\right) -H_{\partial D_{4N}}\left(
v,z\right) \right\vert \leq \frac{\left\vert u-v\right\vert }{4N}
\end{equation*}%
to obtain%
\begin{eqnarray*}
A_{D_{4N}} &=&\sum_{z\in \partial D_{4N}}H_{\partial D_{4N}}\left(
u,z\right) \left( a\left( u-z\right) -a\left( v-z\right) \right) \\
&&+\sum_{z\in \partial D_{4N}}\left( H_{\partial D_{4N}}\left( v,z\right)
-H_{\partial D_{4N}}\left( u,z\right) \right) a\left( v-z\right) \\
&\leq &\frac{\left\vert u-v\right\vert }{4N}\sum_{z\in \partial
D_{4N}}H_{\partial D_{4N}}\left( u,z\right) \\
&&+\sum_{z\in \partial D_{4N}}\left( H_{\partial D_{4N}}\left( v,z\right)
-H_{\partial D_{4N}}\left( u,z\right) \right) \left( a\left( v-z\right) -%
\frac{2}{\pi }\log N-D_{0}\right) \\
&\leq &K\frac{\left\vert u-v\right\vert }{N},\text{ \ \ \ for some }K<\infty 
\text{.}
\end{eqnarray*}%
The same argument gives $\left\vert A_{D_{N}^{\gamma }}\right\vert \leq K%
\frac{\left\vert u-v\right\vert }{\varepsilon N}$, thus (\ref{chain}) is
proved.

Now fix a large $k_{0}$. For $k\geq k_{0}$ let $P_{k}$ be subsets of $R$
that plays the role of dyadic approximations: $P_{k}$ contains $O\left(
2^{k}\right) $ vertices that are equally spaced and the graph distance
between adjacent points is $\varepsilon N2^{-k}$. For $v\in R$, denote by $%
P_{k}\left( v\right) $ the $k^{th}$ dyadic approximation of $v$, namely the
vertex in $P_{k}$ that is closest to $v$. Then for $v\in R$,%
\begin{equation*}
\hat{h}_{v}=P_{k_{0}}\left( v\right) +\sum_{k\geq k_{0}}\hat{h}%
_{P_{k+1}\left( v\right) }-\hat{h}_{P_{k}\left( v\right) }.
\end{equation*}%
We now apply the exponential Brascamp-Lieb inequality (\ref{eq: BL exp bound}%
), (\ref{chain}), and a union bound to obtain 
\begin{eqnarray*}
&&\!\!\!\!\!\mathbb{P}\left( \max_{v\in R}\left[ \hat{h}_{P_{k+1}\left(
v\right) }-\hat{h}_{P_{k}\left( v\right) }\right] >\sqrt{K\left( \frac{3}{2}%
\right) ^{-k}}\right) \\
&&\leq C_{3}2^{k}\exp \left( -K\left( \frac{3}{2}\right) ^{-k}\frac{C_{4}}{%
2\cdot 2^{-k}}\right) \\
&&\leq C_{3}2^{k}\exp \left( -C_{4}\left( \frac{4}{3}\right) ^{k}\frac{K}{2}%
\right) ,
\end{eqnarray*}%
for some constant $C_4$. Since both $\sqrt{K\left( \frac{3}{2}\right) ^{-k}}$
and the tail probability are summable in $k$, we conclude that (\ref{rtail})
holds.
\end{proof}

\bigskip

\noindent \textbf{Acknowledgment} O.Z. thanks Jason Miller for suggesting,
years ago, that the coupling in \cite{M} could be useful in carrying out the
Dekking-Host argument for the Ginzburg-Landau model. W.W. thanks the
Weizmann Institute for its hospitality. This work was started while both
authors were at the Courant Institute, NYU.

\bibliographystyle{alpha}
\bibliography{GLmax}

\end{document}